\documentclass{amsart}
\usepackage{amsmath}
\usepackage{amsfonts,epsfig}
\usepackage{latexsym}
\usepackage{amssymb}
\usepackage{amsthm}
\usepackage{graphicx}
\usepackage{amscd}
\usepackage{cite}

\vfuzz2pt 
\hfuzz2pt 
\newtheorem{thm}{Theorem}[section]
\theoremstyle {definition}
\newtheorem{cor}[thm]{Corollary}
\newtheorem{prop}[thm]{Proposition}
\newtheorem{defn}[thm]{Definition}
\newtheorem{lem}[thm]{Lemma}
\newtheorem{eg}[thm]{Example}

\newtheorem{rem}[thm]{Remark}

\numberwithin{equation}{section}

\begin{document}
\
\title[On S-primary submodules ]
{On S-primary submodules}%

\author{H. Ansari-Toroghy }%
\address{Department of pure Mathematics,
Faculty of mathematical Sciences, University of Guilan,
P. O. Box 41335-19141 Rasht, Iran.} %
\email{ansari@guilan.ac.ir}%

\author{S. S. Pourmortazavi}%
\address{Department of pure Mathematics,
Faculty of mathematical Sciences, University of Guilan,
P. O. Box 41335-19141 Rasht, Iran.} %
\email{mortazavi@phd.guilan.ac.ir}%

\subjclass[2010]{13C13, 13C09, 13A15}%
\keywords {Multiplicatively closed subset, prime submodule, $S$-prime submodule, primary submodule, $S$-primary submodule.}%

\date{\today}%
\begin{abstract}
             Let $R$ be a commutative ring with identity, $S$ a multiplicatively closed subset of $R$, and $M$ be an $R$-module.
             In this paper, we study and investigate some properties of $S$-primary submodules of $M$. Among the other results, it is shown that this
             class of modules contains the family of primary (resp. $S$-prime) submodules properly.
 \end{abstract}

\maketitle


                                                       \section{Introduction}

              Throughout this article, all rings are commutative with identity elements, and all modules
              are unital left modules. $\mathbb{N}$, $\mathbb{Z}$, and $\mathbb{Q}$ will denote respectively the natural
              numbers, the ring of integers and the field of quotients of $\mathbb{Z}$.

              Consider a non-empty subset $S$ of $R$. We call $S$ a multiplicatively closed subset of $R$ if (i) $0 \notin S$, (ii)
              $1 \in S$, and (iii) $ss^{\prime} \in S$ for all $s, s^{\prime} \in S$ \cite{wang2016foundations}. Note that $S=R-p$
              is a multiplicatively closed subset of $R$ for every prime ideal $p$ of $R$. Let $N$ and $K$ be two
              submodules of an $R$-module $M$ and $J$ an ideal of $R$. Then the residual $N$ by $K$ and $J$ is defined as follows:
              $$(N:_{R}K)=\{ \, r \in R\, \, |\,\, rK \subseteq N\},$$
              $$(N:_{M}J)=\{\, m \in M \,\,|\,\, Jm \subseteq N\}.$$

              Particularly, we use $Ann_{R}(M)$ instead of $(0:_{R}M)$, and we use $(N:_{M}s)$ instead of $(N:_{M}Rs)$, where $Rs$ is the principal ideal generated by an element $s \in R$. The sets of prime ideals and maximal ideals of $R$ are denoted by $Spec(R)$ and $Max(R)$, respectively.

              A submodule $P$ of $M$ is called \textit{prime} if $P\neq M$ and whenever $r\in R$ and $e \in M$ satisfy $re\in P$, then $r\in (P:_{R}M)$ or $e\in P$. The set of all prime submodule of $M$ is denoted by $Spec(M)$(See \cite{dauns1978prime, lu2007module}).

              In \cite{sevim2019s}, the authors introduce the concept of $S$-prime submodule and investigate some properties of this class of modules. Let $S$ be a multiplicatively closed subset of $R$ and $P$ be a submodule of $M$ with $(P:_{R}M) \cap S=\emptyset$. Then $P$ is said to be an $S$-\textit{prime submodule} if there exists $s \in S$ such that whenever $rm \in P$, where $r \in R$ and $m \in M$, then $sr \in (P:_{R}M)$ or $sm \in P$. Particularly, an ideal $I$ of $R$ is said to be an $S$-\textit{prime ideal} if $I$ is an $S$-prime submodule of the $R$-module $R$.

              The notion of $S$-primary submodule was introduced in \cite{farshadifararxiv2003}. Let $S$ be a multiplicatively closed subset of $R$ and $P$ be a submodule of $M$ with $(P:_{R}M) \cap S=\emptyset$.  Then $P$ is said to be an $S$-\textit{primary submodule} if there exists $s \in S$ such that whenever $rm \in P$, where $r \in R$ and $m \in M$, then $sr \in \sqrt{(P:_{R}M)}$ or $sm \in P$. The author has not specified the properties of this family of modules excepts a few results about a certain $R$-module.

              In this paper, we will study the family of $S$-primary submodules extensively and investigate some of their properties. In fact, this family of modules is a generalization of primary (resp. $S$-prime) submodules.

              Among the other results, we provide some notions that each one is equivalent to $S$-primary (see Theorem \ref{1}). Examples \ref{2.2} and \ref{2.3} show that these new modules contain the family of primary and $S$-prime submodules properly. Further It is proved that if $P$ is an $S$-primary submodule of $M$, then $S^{-1}P$ is also an $S$-primary submodule of $S^{-1}M$ (see  Proposition \ref{4}). Example \ref{6} shows that the converse is not true in general. Also we show that $S$-primary submodules has a good behavior with direct sums (see Theorems \ref{17} and \ref{18}). Moreover, we provide some useful characterization concerning $S$-primary submodules (see Theorems \ref{14}, \ref{21}, and \ref{22}).

                                                             \section{Main results}

\begin{defn}\label{0.1}
        Let $S$ be a multiplicatively closed subset of $R$ and $P$ be a submodule of $M$ with $(P:_{R}M) \cap S=\emptyset$. Then $P$ is said to be an $S$-\textit{primary submodule} of $M$ if there exists $s \in S$ such that whenever $rm\in P$, where $m \in M$ and $r \in R$, then $sr \in
        \sqrt{(P:_{R}M)}$ or $sm \in P$ \cite[Definition 2.27 ]{farshadifararxiv2003}. In particular, we say that an ideal $I$ of $R$ is an $S$-primary ideal if $I$ is an $S$-primary submodule of $R$-module $R$.
\end{defn}

 \begin{thm}\label{1}

                            Let $S$ be a multiplicatively closed subset of $R$. For a submodule $P$ of an $R$-module $M$
                            with $(P:_{R}M)\cap S=\emptyset$. Then the following are equivalent:

 \begin{itemize}

          \item [(a)] $P$ is an $S$-primary submodule of $M$;

          \item [(b)] There exists $s \in S$ such that for every $r\in R$, the endomorphism\\ $r:s(M/P) \rightarrow s(M/P)$ given by
                           $s\overline{m}=sm+P\mapsto rs\overline{m}=rsm+P$
                      is injective or $(rs)^{t}(M/P)=(\overline{0})$ for some $t \in \mathbb{N}$ ;

          \item [(c)] There exists $s \in S$ such that whenever $rN\subseteq P$, where $N$ is a submodule of $M$ and $r \in R$, then $sr \in
                      \sqrt{(P:_{R}M)}$ or $sN \subseteq P$;

          \item [(d)] There exists $s \in S$ such that whenever $JN\subseteq P$, where $N$ is a submodule of $M$ and $J$ is an ideal of $R$, then $sJ
                      \subseteq \sqrt{(P:_{R}M)}$ or $sN \subseteq P$.

\end{itemize}

\begin{proof}

    (a)$\Rightarrow$(b). By hypothesis, there exists $s \in S$ such that for every $r \in R$ and $m \in M$ if $rm \in P$, then we have $sm \in P$ or $sr
                         \in \sqrt{(P:_{R}M)}$. Now for each $r \in R$, we define the endomorphism $r: s(M/P) \rightarrow s(M/P)$ by $sm+P\mapsto rsm+P$. We show that this endomorphism is injective or $rs \in \sqrt{(P:_{R}M)}$. Assume $rs \notin \sqrt{(P:_{R}M)}$. Then we show the other part holds. To see let $\overline{rsm}=rsm+P=P=\overline{0}$. So we have $(rs)m \in P$. So by hypothesis, $sm \in P$ or $s(rs)=rs^{2} \in \sqrt{(P:_{R}M)}$. We conclude $sr \in \sqrt{(P:_{R}M)}$ which is a contradiction.
                         Hence $sm \in P$, as required.

    (b)$\Rightarrow$(a). It is clear.

    (a)$\Rightarrow$(c). It is clear.                     	

	(c)$\Rightarrow$(d). Let $JN \subseteq P$, where $J$ is an ideal of $R$ and $N$ is a submodule of $M$. We will show that there exists $s \in S$
                         such that $sN \subseteq P$ or $sJ \subseteq \sqrt{(P:_{R}M)}$. Clearly, we have $rN \subseteq P$ for every $r \in J$. So by part (c), there exists $s \in S$ such that $sN \subseteq P$ or $sr \in \sqrt{(P:_{R}M)}$ for every $r \in J$, as desired.

	(d)$\Rightarrow$(a). Take $r \in R$ and $m \in M$ with $rm \in P$. Now, put $J=Rr$ and $N=Rm$. Then we can conclude that $JN=Rrm \subseteq P$.
	                     By assumption, there is an $s \in S$ so that $sJ=Rrs \subseteq \sqrt{(P:_{R}M)}$ or $sN=Rsm \subseteq P$ and so either $sr \in
                         \sqrt{(P:_{R}M)}$ or $sm \in P$, as required.

\end{proof}

\end{thm}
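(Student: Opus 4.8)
The plan is to split the four conditions into two groups and connect them by the shortest available implications. For the endomorphism reformulation I would prove (a)$\Leftrightarrow$(b) directly, and for the remaining statements I would close the cycle (a)$\Rightarrow$(c)$\Rightarrow$(d)$\Rightarrow$(a). Throughout, the witness $s \in S$ supplied by one condition will serve as the witness for the next, so no new element of $S$ ever has to be produced.

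The first thing I would record is the translation that makes (b) intelligible: for fixed $r$ and $s$, the statement $(rs)^t(M/P)=(\overline 0)$ for some $t\in\mathbb N$ says exactly that $(rs)^t\in(P:_R M)$, i.e. $rs\in\sqrt{(P:_R M)}$. With this dictionary, (b) reads: for each $r$, either $\mu_r\colon s(M/P)\to s(M/P)$, $s\overline m\mapsto rs\overline m$, is injective, or $rs\in\sqrt{(P:_R M)}$. To prove (a)$\Rightarrow$(b) I would argue by contrapositive in the first slot: assume $rs\notin\sqrt{(P:_R M)}$ and show $\mu_r$ is injective. An element of the kernel is some $s\overline m$ with $rsm\in P$; feeding the product $(rs)m\in P$ into (a) yields $sm\in P$ or $s(rs)=rs^2\in\sqrt{(P:_R M)}$. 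The former says $s\overline m=\overline 0$, giving injectivity. The reverse implication (b)$\Rightarrow$(a) is the same computation read backwards: if $rm\in P$ then $rsm\in P$, so $s\overline m$ lies in the kernel of $\mu_r$; injectivity forces $sm\in P$, while the other alternative of (b) is precisely $rs\in\sqrt{(P:_R M)}$.

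The step I expect to be the main obstacle is ruling out the spurious alternative $rs^2\in\sqrt{(P:_R M)}$ in the (a)$\Rightarrow$(b) argument. This is a radical-absorption point: if $(rs^2)^k M\subseteq P$ then $(rs)^{2k}M=r^k\bigl((rs^2)^kM\bigr)\subseteq r^kP\subseteq P$, so $rs\in\sqrt{(P:_R M)}$, contradicting the standing assumption; hence only $sm\in P$ survives. I would isolate once and for all the elementary fact $rs\in\sqrt{(P:_R M)}\Leftrightarrow rs^2\in\sqrt{(P:_R M)}$ (an extra factor of $s$ can always be absorbed into, or extracted from, the radical) so that it does not have to be reproved in situ.

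Finally, the cycle (a)$\Rightarrow$(c)$\Rightarrow$(d)$\Rightarrow$(a) is routine specialisation and generalisation, and I would dispatch it quickly. For (a)$\Rightarrow$(c), given $rN\subseteq P$ apply (a) to each $rn\in P$; since the alternative $sr\in\sqrt{(P:_R M)}$ does not depend on $n$, either it holds outright or it fails, and then $sn\in P$ for every $n\in N$, i.e. $sN\subseteq P$. For (c)$\Rightarrow$(d), given $JN\subseteq P$ apply (c) to each $r\in J$: if $sN\subseteq P$ for some such $r$ we are done, and otherwise $sr\in\sqrt{(P:_R M)}$ for all $r\in J$, i.e. $sJ\subseteq\sqrt{(P:_R M)}$. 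For (d)$\Rightarrow$(a), take $J=Rr$ and $N=Rm$, so that $rm\in P$ gives $JN\subseteq P$ and (d) returns exactly the two alternatives of (a).
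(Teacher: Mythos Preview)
Your proof is correct and follows essentially the same route as the paper: the equivalence (a)$\Leftrightarrow$(b) is handled directly, and the remaining conditions are linked via the cycle (a)$\Rightarrow$(c)$\Rightarrow$(d)$\Rightarrow$(a) using the very specialisation/generalisation steps you describe. Your treatment is in fact slightly more careful than the paper's at the one nontrivial point---the implication $rs^{2}\in\sqrt{(P:_{R}M)}\Rightarrow rs\in\sqrt{(P:_{R}M)}$ in (a)$\Rightarrow$(b)---which the paper asserts without comment; your explicit radical-absorption argument $(rs)^{2k}=r^{k}(rs^{2})^{k}$ fills that gap cleanly.
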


\begin{lem}\label{0.3}
     Let $M$ be an $R$-module and $S$ a multiplicatively closed subset of $R$. Then we have the following.

     \begin{itemize}

       \item [(a)] If $P$ is a primary submodule of $M$ such that $(P:_{R}M) \cap S=\emptyset$, then $P$ is an $S$-primary submodule of $M$.

       \item [(b)] If $P$ is an $S$-primary submodule of $M$ and $S \subseteq u(R)$, where $u(R)$ denotes the set of units in $R$, then $P$ is a
                   primary submodule of $M$.

     \end{itemize}

\end{lem}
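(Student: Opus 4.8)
The plan is to treat the two inclusions separately, each by a direct unwinding of the definitions; no deep machinery is needed here, so the work lies entirely in choosing the witness $s$ correctly and keeping the quantifiers straight.

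For part (a), I would exploit that a multiplicatively closed set always contains $1$. The plan is simply to take $s = 1 \in S$ as the witness. The eligibility condition $(P:_{R}M) \cap S = \emptyset$ is given by hypothesis, so $P$ is a candidate for being $S$-primary. Then for any $r \in R$ and $m \in M$ with $rm \in P$, the primary hypothesis yields $r \in \sqrt{(P:_{R}M)}$ or $m \in P$, which is exactly $sr \in \sqrt{(P:_{R}M)}$ or $sm \in P$ when $s = 1$. Hence $P$ is $S$-primary.

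For part (b), the idea is that when every element of $S$ is a unit, the witness $s$ provided by the $S$-primary condition can be cancelled. First I would verify $P \neq M$: if $P = M$ then $(P:_{R}M) = R \supseteq S$, which since $1 \in S$ contradicts $(P:_{R}M) \cap S = \emptyset$. Next, let $s \in S$ be the witness, so $s \in u(R)$. Given $rm \in P$, the $S$-primary condition gives $sr \in \sqrt{(P:_{R}M)}$ or $sm \in P$. In the first case $(sr)^{n} = s^{n}r^{n} \in (P:_{R}M)$ for some $n \in \mathbb{N}$, and multiplying by the unit $s^{-n}$ — using that $(P:_{R}M)$ is an ideal — gives $r^{n} \in (P:_{R}M)$, so $r \in \sqrt{(P:_{R}M)}$. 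In the second case, multiplying $sm \in P$ by $s^{-1}$ and using that $P$ is a submodule gives $m \in P$. Thus the primary condition holds and $P$ is primary.

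The only point that demands a little care — and the closest thing to an obstacle — is the radical case in (b): one must pass the unit through the radical, i.e.\ observe that cancelling $s^{-n}$ against $s^{n}r^{n}$ keeps $r^{n}$ inside the ideal $(P:_{R}M)$, so that $r$ lands in $\sqrt{(P:_{R}M)}$. Everything else is a routine translation between the two conditions, together with the observation that $1 \in S$ drives both the choice of witness in (a) and the verification $P \neq M$ in (b).
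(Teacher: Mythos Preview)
Your proof is correct and is exactly the direct verification the paper has in mind; the paper itself records only ``This is clear,'' and your argument is the natural unpacking of that claim (choose $s=1$ for (a), cancel the unit $s$ for (b)).
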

 \begin{proof}
   This is clear.
\end{proof}

By setting $S=\{1\}$, we conclude that every primary submodule is an $S$-primary submodule by Lemma \ref{0.3}. The following example shows that the converse is not true in general.

\begin{eg}\label{2.2}

        Consider the $\mathbb{Z}$-module $M=\mathbb{Q}\oplus (\bigoplus_{i=1}^{n} \mathbb{Z}_{p_{i}})$, where $p_{i}$ are distinct positive prime integers. Take the submodule $P=(0)$ and the multiplicatively closed subset
        $$S=\{1, p_{1}^{m_{1}}p_{2}^{m_{2}}... p_{n}^{m_{n}}\,|\, \forall i \in \{1,2,...,n\},\,\, m_{i} \in \mathbb{N} \cup \{0\}\,\}.$$
        First note that $(P:_{\mathbb{Z}}M)=(0)$ and $p_{1}p_{2}...p_{n}(0,\overline{1},\overline{1},...,\overline{1})=(0,\overline{0},\overline{0},...,\overline{0}) \in P$. Since $p_{1}p_{2}...p_{n} \notin \sqrt{(P:_{\mathbb{Z}}M)}$ and $(0,\overline{1},\overline{1},...,\overline{1})\notin P$, $P$ is not a primary submodule of $M$. Put $s=p_{1}p_{2}...p_{n}$ and let
        $$k(\frac{p}{q},\overline{x_{1}}, \overline{x_{2}},..., \overline{x_{n}})= (\frac{kp}{q},\overline{kx_{1}}, \overline{kx_{2}},..., \overline{kx_{n}}) \in P,$$
        where $k \in \mathbb{Z}$ and $(\frac{p}{q},\overline{x_{1}}, \overline{x_{2}},..., \overline{x_{n}}) \in M$. Then $kp=0$. This yields that $k=0$ or $p=0$. If $k=0$, there is nothing to prove. Thus assume that $p=0$. Then $s(\frac{p}{q},\overline{x_{1}}, \overline{x_{2}},..., \overline{x_{n}}) \in P$. Therefore, $P$ is an $S$-primary submodule of $M$.

\end{eg}

        We recall that a submodule $P$ of an $R$-module $M$ is $S$-\textit{prime} if there exists $s \in S$ such that whenever $rm \in P$, where $r \in R$ and $m \in M$,  then $sr \in (P:_{R}M)$ or $sm \in P$ \cite{sevim2019s}. Clearly, every $S$-prime submodule is $S$-primary. The following example shows that the converse is not true in general.

\begin{eg}\label{2.3}
     Consider $M=\mathbb{Z}_{4}$ as a $\mathbb{Z}$-module. Set $S=\mathbb{Z}\backslash 2 \mathbb{Z}$ and $P=(\overline{0})$. Thus we have $(P:_{\mathbb{Z}}M)=4\mathbb{Z}$ and $2.\overline{2} \in (\overline{0})$.
     Since for every $s \in S$, $ 2s \notin (P:_{\mathbb{Z}}M)$ and $s.\overline{2} \notin P$, $P$ is not an $S$-prime submodule of $M$. Put $s=1$ and let $k\overline{a}=\overline{0}$. If $\overline{a}=\overline{0}$, there is nothing to prove. Thus assume that $\overline{a}\neq \overline{0}$.   Then $k=2k^{\prime}$ for some $k^{\prime} \in \mathbb{Z}$. This implies that $k \in \sqrt{(P:_{\mathbb{Z}}M)}$. Therefore, $P$ is an $S$-primary submodule of $M$.

\end{eg}

 \begin{rem}\label{3}
                    Let $S$ be a multiplicatively closed subset of $R$. Recall that the saturation $S^{*}$ of $S$ is defined as
		            $$S^{*}=\{x \in R\, \, | \, \, \frac{x}{1} \,\, \text{is a unit of}\,\, S^{-1}R\}.$$
		            It is obvious that $S^{*}$ is a multiplicatively closed subset of $R$ containing $S$ \cite{gilmer1992multiplicative}.
	
\end{rem}

 \begin{prop} \label{4}
            Let $S$ be a multiplicatively closed subset of $R$ and $M$ be an $R$-module. Then we have the following.

 \begin{itemize}

        \item [(a)] If $S_{1} \subseteq S_{2}$ are multiplicatively closed subsets of $R$ and $P$ is an $S_{1}$-primary submodule
                    of $M$, then $P$ is an $S_{2}$-primary submodule of $M$ in case \linebreak
                    $(P:_{R}M) \cap S_{2}=\emptyset$.

        \item [(b)] $P$ is an $S$-primary submodule of $M$ if and only if $P$ is an $S^{*}$-primary submodule of $M$.

        \item [(c)] If $P$ is an $S$-primary submodule of $M$, then $S^{-1}P$ is a primary submodule of $S^{-1}R$-module $S^{-1}M$.

 \end{itemize}

 \end{prop}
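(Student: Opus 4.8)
The plan is to prove the three parts of Proposition \ref{4} more or less directly from the definitions, since each is a natural stability or localization statement about $S$-primary submodules. Part (a) is the easiest: if $P$ is $S_1$-primary, the witnessing element $s \in S_1$ lies in $S_2$ as well because $S_1 \subseteq S_2$, so the very same $s$ witnesses that $P$ is $S_2$-primary. The only hypothesis one must carry along is $(P:_R M) \cap S_2 = \emptyset$, which is assumed, so no real work is needed beyond observing that the defining implication is unchanged.

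For part (b), one direction is immediate from part (a), since $S \subseteq S^*$ (by Remark \ref{3}) and one checks $(P:_R M) \cap S^* = \emptyset$: indeed, if some $x \in (P:_R M)$ had $x/1$ a unit in $S^{-1}R$, then multiplying by a suitable element of $S$ would force an element of $S$ into $(P:_R M)$, contradicting $(P:_R M) \cap S = \emptyset$. The substantive direction is the converse: assuming $P$ is $S^*$-primary with witness $s^* \in S^*$, I must produce a witness in $S$ itself. The idea is that $s^*/1$ being a unit in $S^{-1}R$ means there exist $s' \in S$ and $t \in S$ with $s^* s' t = t s$-type relation; concretely, $s^* \cdot a \in S$ for some $a \in R$ after clearing denominators, so I can replace $s^*$ by an honest element of $S$ that still forces the required membership. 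I expect this replacement step to be the main obstacle: one must verify that if $s^* r \in \sqrt{(P:_R M)}$ then the adjusted element of $S$ times $r$ still lands in $\sqrt{(P:_R M)}$ (here the radical helps, since multiplying by any ring element keeps us inside the radical once we are there), and similarly that $s^* m \in P$ can be upgraded.

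For part (c), the goal is to show $S^{-1}P$ is a genuine (not merely $S^{-1}S$-) primary submodule of the $S^{-1}R$-module $S^{-1}M$. First I would confirm $S^{-1}P \neq S^{-1}M$, which follows from $(P:_R M) \cap S = \emptyset$ (otherwise $S^{-1}P = S^{-1}M$). Then, to verify the primary condition, suppose $(r/s_1)(m/s_2) \in S^{-1}P$ in $S^{-1}M$. By the standard description of localized submodules, this means $u r m \in P$ for some $u \in S$. Applying the $S$-primary hypothesis to the element $ur \in R$ and $m \in M$, with the global witness $s \in S$, yields either $s(ur) \in \sqrt{(P:_R M)}$ or $sm \in P$. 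In the first case I would push this through localization to conclude $r/s_1 \in \sqrt{(S^{-1}P :_{S^{-1}R} S^{-1}M)}$, using that the radical commutes with localization and that $s, u$ become units in $S^{-1}R$; in the second case $sm \in P$ gives $m/s_2 \in S^{-1}P$ after dividing by the unit $s/1$. The main technical point here is the commutation of radical and residual with localization, namely $\sqrt{S^{-1}(P:_R M)} = S^{-1}\sqrt{(P:_R M)}$ and $(S^{-1}P :_{S^{-1}R} S^{-1}M) = S^{-1}(P:_R M)$ (the latter holds for finitely generated $M$, or one argues directly on annihilators), so I would either invoke standard localization facts or include a short verification that the colon and radical behave as expected.
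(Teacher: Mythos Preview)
Your proposal is correct and follows essentially the same route as the paper for all three parts: part (a) is immediate, part (b) uses $S\subseteq S^{*}$ together with the unit property of elements of $S^{*}$ to manufacture a witness in $S$, and part (c) pushes the $S$-primary condition through localization. One remark on part (c): your concern about the identity $(S^{-1}P:_{S^{-1}R}S^{-1}M)=S^{-1}(P:_{R}M)$ requiring $M$ to be finitely generated is unnecessary here, since only the inclusion $S^{-1}\sqrt{(P:_{R}M)}\subseteq \sqrt{(S^{-1}P:_{S^{-1}R}S^{-1}M)}$ is needed, and that direction holds for arbitrary $M$; this is exactly how the paper handles it.
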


 \begin{proof}

 \begin{itemize}
 	
  	\item [(a)] It is clear.

    \item [(b)] Assume that $P$ is an $S$-primary submodule of $M$. We need to prove that $(P:_{R}M)$ and $S^{*}$ are disjoint. Suppose there exists
                $x \in (P:_{R}M) \cap S^{*}$. As $x \in S^{*}$, $\frac{x}{1}$ is a unit of $S^{-1}R$ and so  $(\frac{x}{1})(\frac{a}{s})=1$ for some $a \in R$ and $s \in S$. This yields that $us=uxa$ for some $u \in S$. Now we have that $us=uxa \in (P:_{R}M) \cap S$, a contradiction. Thus $(P:_{R}M) \cap S^{*}=\emptyset$. Now as $S \subseteq S^{*}$, by part (b), $P$ is an $S^{*}$-primary submodule of $M$. Conversely, assume that $P$ is an $S^{*}$-primary submodule of $M$. Let $rm \in P$, where $r \in R$ and $m \in M$. Then there exists $x \in S^{*}$ such that $xr \in \sqrt{(P:_{R}M)}$ or $xm \in P$. As $\frac{x}{1}$ is a unit of $S^{-1}R$, there exist $u,s \in S$ and $a \in R$ such that $us=uxa$. Put $us=s^{\prime} \in S$. Then note that $s^{\prime}r=(us)r=uaxr \in \sqrt{(P:_{R}M)}$ or $s^{\prime}m \in P$. Therefore, $P$ is an $S$-primary submodule of $M$.

    \item [(c)] Let $(\frac{r}{s})(\frac{m}{t}) \in S^{-1}P$, where $\frac{r}{s} \in S^{-1}R$ and
                $\frac{m}{t} \in S^{-1}M$. Then $urm \in P$ for some $u \in S$. Since $P$ is an $S$-primary submodule of $M$, there is an $s^{\prime} \in S$ so that $s^{\prime}ur \in \sqrt{(P:_{R}M)}$ or $s^{\prime}m \in P$. This yields that $\frac{r}{s}=\frac{s^{\prime}ur}{s^{\prime}us} \in  S^{-1}\sqrt{(P:_{R}M)} \subseteq \sqrt{(S^{-1}P:_{S^{-1}R}S^{-1}M)}$ or  $\frac{m}{t}=\frac{s^{\prime}m}{s^{\prime}t} \in S^{-1}P$. Hence, $S^{-1}P$ is a primary submodule of $S^{-1}M$.

\end{itemize}

\end{proof}

The following example shows that the converse of part (c) of Proposition \ref{4} is not true in general.

\begin{eg}\label{6}

        Consider the $\mathbb{Z}$-module $M=\mathbb{Q}$. Take the submodule $N=\mathbb{Z}$ and the multiplicatively closed subset $S=\mathbb{Z}-\{0\}$ of $\mathbb{Z}$. Then $(N:_{\mathbb{Z}}M)=(0)$. Let $s$ be an arbitrary element of $S$. Choose a prime number $p$ with $gcd(p,s)=1$. Then note that $p \frac{1}{p}=1 \in N$. But $sp \notin \sqrt{(N:_{\mathbb{Z}}M)}$ and $\frac{s}{p} \notin N$, it follows that $N$ is not an $S$-primary submodule of $M$. Since $S^{-1}\mathbb{Z}=\mathbb{Q}$ is a field, $S^{-1}(\mathbb{Q})$ is a vector space. Therefore the proper submodule $S^{-1}N$ is a primary submodule of $S^{-1}\mathbb{Q}$.
\end{eg}

  \begin{prop} \label{6.1}

                  Suppose $f:M \rightarrow M^{\prime}$ is an $R$-homomorphism. Then we have the following.

    \begin{itemize}

            \item [(a)] If $P^{\prime}$ is an $S$-primary submodule of $M^{\prime}$ provided that $(f^{-1}(P^{\prime}):_{R}M) \cap S= \emptyset$, then $f^{-1}(P^{\prime})$ is an $S$-primary submodule of $M$.

            \item [(b)] If $f$ is an epimorphism and $P$ is an $S$-primary submodule of $M$ with $ker(f) \subseteq P$, then $f(P)$ is an $S$-primary submodule of $M^{\prime}$.

    \end{itemize}

  \end{prop}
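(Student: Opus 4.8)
The plan for both parts is to keep fixed the witness $s\in S$ supplied by the $S$-primary hypothesis and to transport the defining implication across $f$ using only the identity $f(rm)=rf(m)$. The genuine content lies not in this transport but in comparing the colon ideals $(P':_{R}M')$, $(f^{-1}(P'):_{R}M)$, $(f(P):_{R}M')$, $(P:_{R}M)$ and their radicals, so I would dispose of these colon-ideal relations first in each part.

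For (a), write $P=f^{-1}(P')$ and let $s\in S$ be a witness for $P'$. I would first record the inclusion $(P':_{R}M')\subseteq(f^{-1}(P'):_{R}M)$: if $aM'\subseteq P'$, then for $m\in M$ we have $f(am)=af(m)\in aM'\subseteq P'$, so $am\in f^{-1}(P')$; taking radicals gives $\sqrt{(P':_{R}M')}\subseteq\sqrt{(P:_{R}M)}$. Now suppose $rm\in f^{-1}(P')$. Then $rf(m)=f(rm)\in P'$, so the $S$-primary property of $P'$ yields $sf(m)\in P'$ or $sr\in\sqrt{(P':_{R}M')}$. In the first case $f(sm)=sf(m)\in P'$, i.e. $sm\in f^{-1}(P')=P$; in the second case $sr\in\sqrt{(P':_{R}M')}\subseteq\sqrt{(P:_{R}M)}$. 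Since $(P:_{R}M)\cap S=\emptyset$ is assumed, $P$ is $S$-primary with witness $s$.

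For (b), write $Q=f(P)$. The key preliminary step is the identity $(f(P):_{R}M')=(P:_{R}M)$, valid because $f$ is onto and $ker(f)\subseteq P$. The inclusion $\subseteq$ is the nontrivial one: if $aM'\subseteq f(P)$, then for $m\in M$ we have $f(am)=af(m)\in f(P)$, so $f(am)=f(p)$ for some $p\in P$, whence $am-p\in ker(f)\subseteq P$ and $am\in P$; the reverse inclusion is immediate from surjectivity. This identity does double duty: it gives $(f(P):_{R}M')\cap S=(P:_{R}M)\cap S=\emptyset$ and $\sqrt{(f(P):_{R}M')}=\sqrt{(P:_{R}M)}$. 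Taking the witness $s\in S$ for $P$, suppose $rm'\in f(P)$ with $m'\in M'$. By surjectivity $m'=f(m)$, and $f(rm)=rf(m)\in f(P)$ gives, exactly as above, $rm-p\in ker(f)\subseteq P$ for some $p\in P$, so $rm\in P$. The $S$-primary property of $P$ then yields $sr\in\sqrt{(P:_{R}M)}=\sqrt{(f(P):_{R}M')}$ or $sm\in P$, and in the latter case $sm'=sf(m)=f(sm)\in f(P)$. Hence $f(P)$ is $S$-primary with witness $s$.

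The main obstacle, and really the only point requiring care, is the membership-lifting step in (b): converting $f(x)\in f(P)$ back into $x\in P$. This is precisely where $ker(f)\subseteq P$ is indispensable, and the same device simultaneously drives the colon-ideal identity and the transport of the defining implication. Everything else reduces to routine applications of $f(rm)=rf(m)$, and one could alternatively phrase the arguments through the submodule form of the definition recorded in Theorem \ref{1}.
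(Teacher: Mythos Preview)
Your proof is correct and follows essentially the same approach as the paper: both parts verify the defining implication of $S$-primary directly via $f(rm)=rf(m)$ after comparing the relevant colon ideals. Your organization of part (b) is in fact slightly tighter than the paper's: you establish the \emph{equality} $(f(P):_{R}M')=(P:_{R}M)$ once and use it both for disjointness from $S$ and for the radicals, whereas the paper handles disjointness by a separate contradiction argument and then invokes only the inclusion $\sqrt{(P:_{R}M)}\subseteq\sqrt{(f(P):_{R}M')}$.
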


  \begin{proof}

    \begin {itemize}

    \item [(a)] Let $rm \in f^{-1}(P^{\prime})$ for some $r \in R$ and $m \in M$. This yields that $f(rm)=rf(m) \in P^{\prime}$. Since
                $P^{\prime}$ is an $S$-primary submodule of $M^{\prime}$, there is an $s \in S$ so that $sr \in \sqrt{(P^{\prime}:_{R}M^{\prime})}$ or $sf(m) \in P^{\prime}$. Now we will show that $(P^{\prime}:_{R}M^{\prime}) \subseteq (f^{-1}(P^{\prime}):_{R}M)$. Take $x \in (P^{\prime}:_{R}M^{\prime})$. Then we have $xM^{\prime} \subseteq P^{\prime}$. Since $f(M) \subseteq M^{\prime}$, we conclude that $f(xM)=xf(M) \subseteq xM^{\prime} \subseteq P^{\prime}$. This implies that $xM \subseteq f^{-1}(f(M)) \subseteq f^{-1}(p^{\prime})$ and thus $x \in (f^{-1}(P^{\prime}):_{R}M)$. As  $\sqrt{(P^{\prime}:_{R}M^{\prime})} \subseteq \sqrt{(f^{-1}(P^{\prime}):_{R}M)}$, we can conclude either $sa \in \sqrt{(f^{-1}(P^{\prime}):_{R}M)}$ or $sm \in f^{-1}(P^{\prime})$. Hence, $f^{-1}(P^{\prime})$ is an $S$-primary submodule of $M$.

    \item [(b)] First note that $(f(P):_{R}M) \cap S=\emptyset$. Otherwise there would be an $s \in (f(P):_{R}M^{\prime}) \cap S$. Since $s \in
                (f(P):_{R}M^{\prime})$, $sM^{\prime} \subseteq f(P)$, but then $f(sM)=sf(M)=sM^{\prime} \subseteq f(P)$. By taking their inverse images under $f$, we have
                $$sM \subseteq sM+ker(f) \subseteq f^{-1}(f(P))=P+ker(f)=P.$$
                That means $s \in (P:_{R}M)$, which contradictions $P$ is an $S$-primary submodule of $M$. Now take $r \in R$ and $m^{\prime} \in M^{\prime}$ with $rm^{\prime} \in f(P)$. As $f$ is an epimorphism, there is an $m \in M$ such that $m^{\prime}=f(m)$. Then $rm^{\prime}=rf(m)=f(rm) \in f(P)$. Since $Ker(f)$ is a subset of $P$, we get $rm \in P$. As $P$ is an $S$-primary submodule of $M$, there is an $s \in S$ so that $sr \in \sqrt{(P:_{R}M)}$ or $sm \in P$. Since $\sqrt{(P:_{R}M)} \subseteq \sqrt{(f(P):_{R}M^{\prime})}$, we have $sr \in \sqrt{(f(P):_{R}M^{\prime})}$ or $f(sm)=sf(m)=sm^{\prime} \in f(P)$. Accordingly, $f(P)$ is an $S$-primary submodule of $M^{\prime}$.

    \end {itemize}

  \end{proof}

  \begin{cor} \label{7}
                 Let $S$ be a multiplicatively closed subset of $R$ and take a submodule $L$ of $M$. Then we have the following.

         \begin {itemize}

                 \item [(a)] If $P^{\prime}$ is an $S$-primary submodule of $M$ with $(P^{\prime}:_{R}L) \cap S= \emptyset$,
                 then $L \cap P^{\prime}$ is an $S$-primary submodule of $L$.

                 \item [(b)] Suppose that $P$ is a submodule of $M$ with $L \subseteq P$. Then $P$ is an $S$-primary submodule of $M$ if and only if $P/L$ is an $S$-primary submodule of $M/L$.

          \end {itemize}

  \end{cor}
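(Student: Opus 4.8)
The plan is to obtain both parts as direct consequences of Proposition \ref{6.1}, by choosing the right $R$-homomorphism in each case and then checking that the colon-ideal hypotheses of that proposition are met. The only real work lies in matching up the relevant residuals; once that is done, each assertion is a transcription through Proposition \ref{6.1}.

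For part (a), I would take $f$ to be the inclusion map $\iota\colon L \hookrightarrow M$, viewed as an $R$-homomorphism with codomain $M$, so that $\iota^{-1}(P') = L \cap P'$. The key observation is that, since $rL \subseteq L$ holds automatically for every $r \in R$, one has $rL \subseteq L \cap P'$ if and only if $rL \subseteq P'$; hence $(L \cap P' :_R L) = (P' :_R L)$. Thus the standing hypothesis $(P' :_R L) \cap S = \emptyset$ is precisely the disjointness condition $(\iota^{-1}(P') :_R L) \cap S = \emptyset$ needed to apply Proposition \ref{6.1}(a) with domain $L$, and that proposition then yields at once that $L \cap P' = \iota^{-1}(P')$ is an $S$-primary submodule of $L$.

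For part (b), I would use the canonical projection $\pi\colon M \to M/L$, an epimorphism with $\ker(\pi) = L$. For the forward implication, the assumption $L \subseteq P$ gives $\ker(\pi) \subseteq P$, so Proposition \ref{6.1}(b) applies and shows $\pi(P) = P/L$ is $S$-primary in $M/L$. For the converse I would invoke Proposition \ref{6.1}(a) with $f = \pi$ and $P' = P/L$; here $L \subseteq P$ gives $\pi^{-1}(P/L) = P$. The main (minor) obstacle is verifying the disjointness hypothesis, which reduces to the identity $(P/L :_R M/L) = (P :_R M)$. This holds because, again using $L \subseteq P$, the inclusion $r(M/L) \subseteq P/L$ is equivalent to $rM + L \subseteq P$, which is in turn equivalent to $rM \subseteq P$. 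Since $P/L$ being $S$-primary forces $(P/L :_R M/L) \cap S = \emptyset$, this identity delivers $(P :_R M) \cap S = \emptyset$, and Proposition \ref{6.1}(a) then gives that $P = \pi^{-1}(P/L)$ is $S$-primary in $M$.

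In short, the entire argument rests on the two residual-ideal identities $(L \cap P' :_R L) = (P' :_R L)$ and $(P/L :_R M/L) = (P :_R M)$; these are routine bookkeeping with the colon operation, and with them in hand no further difficulty arises.
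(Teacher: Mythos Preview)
Your proposal is correct and follows essentially the same approach as the paper: both parts are obtained from Proposition \ref{6.1} via the inclusion $L\hookrightarrow M$ and the canonical projection $M\to M/L$, after checking the needed colon-ideal disjointness. The only minor difference is that for the converse of (b) the paper verifies the $S$-primary condition for $P$ directly (using $(P/L:_R M/L)=(P:_R M)$ inside the argument) rather than reinvoking Proposition \ref{6.1}(a); your route through the proposition is equally valid and arguably more uniform.
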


\begin{proof}
        \begin {itemize}

                 \item [(a)] Consider the injection $i:L \rightarrow M$ defined by $i(m)=m$ for all $m \in L$. Then note that $i^{-1}(P^{\prime})=L \cap P^{\prime}$. Now we will show that $(i^{-1}(P^{\prime}):_{R}L) \cap S= \emptyset$. Assume that $s \in (i^{-1}(P^{\prime}):_{R}L) \cap S$. Then we have $sL \subseteq i^{-1}(P^{\prime})=L \cap P^{\prime} \subseteq P^{\prime}$. This implies that $s \in (P^{\prime}:_{R}L) \cap S$, a contradiction. The rest follows from Proposition \ref{6.1} (a).

                 \item [(b)] Assume that $P$ is an $S$-primary submodule of $M$. Then consider the canonical homomorphism $\pi:M \rightarrow M/L$ defined by $\pi(m)=m+L$ for all $m \in M$. By Proposition \ref{6.1} (b), $P/L$ is an $S$-primary submodule of $M/L$. Conversely, assume that $P/L$ is an $S$-primary submodule of $M/L$. Let $rm \in P$ for some $r \in R$ and $m \in M$. This yields that $r(m+L)=rm+L \in P/L$. As $P/L$ is an $S$-primary submodule of $M/L$, there is an $s \in S$ so that $sr \in \sqrt{(P/L:_{R}M/L)}=\sqrt{(P:_{R}M)}$ or $s(m+L)=sm+L \in P/L$. Therefore, we have $sr \in \sqrt{(P:_{R}M)}$ or $sm \in P$. Hence, $P$ is an $S$-primary submodule of $M$.

        \end {itemize}
\end{proof}

           An $R$-module $M$ is said to be a multiplication module if for every submodule $N$ of $M$ there exists an ideal $I$ of $R$ such that $N=IM$ \cite{el1988multiplication}.

  \begin{prop} \label{8}
                 Let $M$ be an $R$-module and $S$ be a multiplicatively closed subset of $R$. The following statements hold.

  \begin {itemize}

    \item [(a)] If $P$ is an $S$-primary submodule of $M$, then $(P:_{R}M)$ is an $S$-primary ideal of $R$.

    \item [(b)] If $M$ is multiplication module and $(P:_{R}M)$ is an $S$-primary ideal of $R$, then $P$ is an $S$-primary submodule of $M$.

    \end {itemize}

  \end{prop}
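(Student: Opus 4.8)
The plan is to prove both directions by translating between the module-theoretic $S$-primary condition and the ideal one, leaning on the characterization in Theorem \ref{1}, particularly part (d), which permits us to argue with products of an ideal and a submodule rather than with single elements. This sidesteps the usual nuisance that the two alternatives in the element-wise definition might depend on the chosen element.

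For part (a), I would first note that the disjointness hypothesis $(P:_{R}M) \cap S=\emptyset$ is already the disjointness condition required of the ideal $(P:_{R}M)$ viewed as a submodule of the $R$-module $R$, since $((P:_{R}M):_{R}R)=(P:_{R}M)$. Fix the single $s \in S$ furnished by Theorem \ref{1}(d). Given $a,b \in R$ with $ab \in (P:_{R}M)$, I set $J=Ra$ and $N=bM$ and observe $JN=abM \subseteq P$. Theorem \ref{1}(d) then yields $sJ \subseteq \sqrt{(P:_{R}M)}$ or $sN \subseteq P$; the first alternative is $sa \in \sqrt{(P:_{R}M)}$, and the second reads $(sb)M \subseteq P$, i.e. $sb \in (P:_{R}M)$. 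This is exactly the $S$-primary ideal condition for $(P:_{R}M)$ with witness $s$.

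For part (b), I would use that a multiplication module satisfies $N=(N:_{R}M)M$ for every submodule $N$, so in particular every cyclic submodule can be written as $Rm=JM$ for a suitable ideal $J$. Writing $I=(P:_{R}M)$ and taking the witness $s \in S$ for the $S$-primary ideal $I$, I start from $rm \in P$, note $r(Rm) \subseteq P$, and rewrite this via $Rm=JM$ as $(rJ)M \subseteq P$, equivalently $(Rr)J \subseteq I$. Applying the ideal form of Theorem \ref{1}(d) gives $s(Rr) \subseteq \sqrt{I}$ or $sJ \subseteq I$; the first case is $sr \in \sqrt{(P:_{R}M)}$, while in the second $sJ \subseteq I=(P:_{R}M)$ forces $s(JM)=s(Rm) \subseteq P$, hence $sm \in P$. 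Combined with the inherited disjointness $(P:_{R}M) \cap S=\emptyset$, this makes $P$ an $S$-primary submodule.

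The hard part will be the careful bookkeeping in part (b): confirming the standard identity $N=(N:_{R}M)M$ for multiplication modules and, above all, the equivalence $(rJ)M \subseteq P \iff rJ \subseteq (P:_{R}M)$, which is what lets me push the multiplier $s$ back across the decomposition $Rm=JM$ and conclude $s(Rm)\subseteq P$ rather than only an inclusion into something larger. Part (a), by contrast, should amount to a direct application of Theorem \ref{1}(d) once the identifications for the $R$-module $R$ are recorded, so I expect no genuine obstacle there.
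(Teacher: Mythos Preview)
Your proposal is correct and follows essentially the same approach as the paper: both parts hinge on Theorem~\ref{1}(d) together with the multiplication-module identity $N=(N:_{R}M)M$ (and hence $(P:_{R}M)M=P$). The only cosmetic difference is which equivalent condition of Theorem~\ref{1} is verified---in (b) the paper checks condition (d) for arbitrary $J,N$ while you check the element-wise definition for a cyclic $Rm$, and in (a) the roles are reversed.
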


       \begin{proof}
       	
       	 \begin {itemize}

       	     \item[(a)] Let $xy \in (P:_{R}M)$ for some $x, y \in R$. Then $xym \in P$ for all $m \in M$. As $P$ is an $S$-primary submodule, there exists $s \in S$ such that $sx \in \sqrt{(P:_{R}M)}$ or $sym \in P$ for all $m \in M$. If $sx \in \sqrt{(P:_{R}M)}$, there is nothing to prove. Suppose that $sx \notin \sqrt{(P:_{R}M)}$. Then $sym \in P$ for all $m \in M$ so that $sy \in (P:_{R}M)$. Therefore, $(P:_{R}M)$ is an $S$-primary ideal of $R$.

             \item[(b)] Let $J$ be an ideal of $R$ and $N$ a submodule of $M$ with $JN \subseteq P$. Then we can conclude that $J(N:_{R}M) \subseteq (JN:_{R}M) \subseteq (P:_{R}M)$. As $(P:_{R}M)$ is an $S$-primary ideal of $R$, there is an $s \in S$ so that $s(N:_{R}M) \subseteq (P:_{R}M)$ or $sJ \subseteq \sqrt{(P:_{R}M)}$. Thus, we can conclude that $sN=s(N:_{R}M)M \subseteq (P:_{R}M)M=P$ or $sJ \subseteq \sqrt{(P:_{R}M)}$. Therefore, by Theorem \ref{1} (d), $P$ is an $S$-primary submodule of $M$.

          \end {itemize}
  \end{proof}

\begin{rem}\label{9}

     \begin {itemize}

            \item[(a)] Assume that $M$ is a multiplication $R$-module and $K, L$ are two submodules of $M$. The product of $K$ and $L$ is defined as $KL=(K:_{R}M)(L:_{R}M)M$ \cite{ameri2003prime}.

            \item[(b)] Let $M$ be an $R$-module and $N$ a submodule of $M$. The radical of $N$, denoted by $rad(N)$, is the
            intersection of all prime submodules of $M$ containing $N$; that is,
            $rad(N)=\bigcap \{ P\,\, |\,\,  N \subseteq P ,\,\, P \in Spec(M) \}$ \cite{mccasland1986radicals}.

\end {itemize}
	
\end{rem}

As an immediate consequence of the Proposition \ref{8} and Theorem \ref{1} (d), we have the following explicit result.

  \begin{cor} \label{10}
              Suppose that $M$ is a multiplication $R$-module and $P$ a submodule of $M$ provided that $(P:_{R}M) \cap S=\emptyset$, where $S$ is a multiplicatively closed subset of $R$. Then the following are equivalent:

              \begin {itemize}

              \item[(a)] $P$ is an $S$-primary submodule of $M$;

              \item[(b)] There exists $s \in S$ such that whenever $LN \subseteq P$, where $L$ and $N$ are submodules of $M$, then $s(L:_{R}M) \subseteq \sqrt{(P:_{R}M)}$ or $sN \subseteq P$.

              \end {itemize}
  \end{cor}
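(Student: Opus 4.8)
The plan is to prove Corollary \ref{10} by combining Proposition \ref{8} with the characterization of $S$-primary submodules given in Theorem \ref{1}(d). Since $M$ is a multiplication module, every submodule has the form $IM$ for some ideal $I$, and I can pass back and forth between submodules and their residual ideals $(\cdot :_R M)$. The two directions of the equivalence will use the two parts of Proposition \ref{8} together with Remark \ref{9}(a), which tells me that for submodules $L,N$ of a multiplication module the product is $LN=(L:_R M)(N:_R M)M$.

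For the direction (a)$\Rightarrow$(b), suppose $P$ is $S$-primary. By Proposition \ref{8}(a), $(P:_R M)$ is an $S$-primary ideal of $R$, so there exists $s\in S$ witnessing this. Now take submodules $L,N$ of $M$ with $LN\subseteq P$. Using the product definition in Remark \ref{9}(a), this means $(L:_R M)(N:_R M)M\subseteq P$, hence $(L:_R M)(N:_R M)\subseteq (P:_R M)$. Applying the $S$-primary property of the ideal $(P:_R M)$ with the ideal $J=(L:_R M)$ and the element-wise condition, I obtain $s(L:_R M)\subseteq\sqrt{(P:_R M)}$ or $s(N:_R M)\subseteq (P:_R M)$. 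In the second case, multiplying by $M$ gives $sN=s(N:_R M)M\subseteq (P:_R M)M=P$, where the first equality uses that $M$ is multiplication so $N=(N:_R M)M$. This yields exactly condition (b).

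For the converse (b)$\Rightarrow$(a), I would verify condition (d) of Theorem \ref{1}. Suppose $JN\subseteq P$ where $J$ is an ideal and $N$ a submodule. I want to produce two submodules whose product lies in $P$ and apply (b). The natural choice is $L=JM$, so that $(L:_R M)\supseteq J$ and $LN=(JM:_R M)(N:_R M)M$; more directly, from $JN\subseteq P$ I get $J(N:_R M)M = JN\subseteq P$ (using $N=(N:_R M)M$), hence $(JM)(N) = (JM:_R M)(N:_R M)M \subseteq P$ after checking $J\subseteq (JM:_R M)$. Applying (b) to the pair $L=JM$ and $N$ gives $s(JM:_R M)\subseteq\sqrt{(P:_R M)}$ or $sN\subseteq P$; since $J\subseteq (JM:_R M)$, the first alternative forces $sJ\subseteq\sqrt{(P:_R M)}$. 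Either way the hypotheses of Theorem \ref{1}(d) are met, so $P$ is $S$-primary.

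I expect the main obstacle to be the careful bookkeeping of the product operation from Remark \ref{9}(a) and the repeated use of the identity $N=(N:_R M)M$, which holds precisely because $M$ is a multiplication module. In particular, the inclusion $J\subseteq (JM:_R M)$ (and whether it can be replaced by equality) needs attention, since in a general multiplication module $(JM:_R M)$ may be strictly larger than $J$; fortunately only the inclusion is needed, because it is used to push $s(JM:_R M)\subseteq\sqrt{(P:_R M)}$ down to $sJ\subseteq\sqrt{(P:_R M)}$. The rest is a routine translation between the submodule-product formulation and the ideal-theoretic $S$-primary condition, so once these two algebraic identities are handled cleanly the equivalence follows directly from Proposition \ref{8} and Theorem \ref{1}(d).
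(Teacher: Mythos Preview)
Your overall architecture is exactly what the paper intends: it states the corollary as an immediate consequence of Proposition~\ref{8} and Theorem~\ref{1}(d), and both of your directions run through these results in the natural way.

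There is, however, one genuine slip in your (b)$\Rightarrow$(a) argument. You write that from $J(N:_{R}M)M=JN\subseteq P$ and $J\subseteq(JM:_{R}M)$ one concludes $(JM:_{R}M)(N:_{R}M)M\subseteq P$. That inference is backwards: enlarging $J$ to $(JM:_{R}M)$ can only make the product $(JM:_{R}M)(N:_{R}M)M$ \emph{larger}, so the inclusion $J\subseteq(JM:_{R}M)$ does not by itself push it into $P$. Fortunately the conclusion you want is still true, and the fix is a one-line regrouping using the multiplication property in the other factor:
\[
(JM)N=(JM:_{R}M)(N:_{R}M)M=(N:_{R}M)\bigl[(JM:_{R}M)M\bigr]=(N:_{R}M)(JM)=J(N:_{R}M)M=JN\subseteq P,
\]
where the key step is $(JM:_{R}M)M=JM$, valid because $M$ is a multiplication module. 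With this correction, applying (b) to $L=JM$ and $N$ gives $s(JM:_{R}M)\subseteq\sqrt{(P:_{R}M)}$ or $sN\subseteq P$, and then $J\subseteq(JM:_{R}M)$ is used in the \emph{right} direction to deduce $sJ\subseteq\sqrt{(P:_{R}M)}$, exactly as you intended. After this repair your proof is complete and agrees with the paper's route.
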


  \begin{cor} \label{11}

        Suppose that $M$ is a finitely generated multiplication $R$-module and $P$ is a submodule of $M$ provided that $(P:_{R}M) \cap S=\emptyset$, where $S$ is a multiplicatively closed subset of $R$. Then the following are equivalent:

        \begin {itemize}

        \item[(a)] $P$ is an $S$-primary submodule of $M$;

        \item[(b)] There exists $s \in S$ such that whenever $LN \subseteq P$, where $L$ and $N$ are submodules of $M$, then $sL \subseteq rad(P)$ or $sN \subseteq P$.

        \end {itemize}

  \end{cor}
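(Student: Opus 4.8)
The plan is to obtain Corollary~\ref{11} directly from Corollary~\ref{10}. Since a finitely generated multiplication module is in particular a multiplication module, Corollary~\ref{10} already gives that (a) is equivalent to the existence of $s\in S$ for which $LN\subseteq P$ always forces $s(L:_{R}M)\subseteq\sqrt{(P:_{R}M)}$ or $sN\subseteq P$. The second alternative here is literally the second alternative in statement (b) of Corollary~\ref{11}, so the whole problem reduces to showing that, for a fixed $s\in S$ and every submodule $L$ of $M$,
$$s(L:_{R}M)\subseteq\sqrt{(P:_{R}M)}\quad\Longleftrightarrow\quad sL\subseteq rad(P).$$
Once this is established, statement (b) of Corollary~\ref{11} and statement (b) of Corollary~\ref{10} are word-for-word the same condition, and the corollary follows.

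Two standard facts about finitely generated multiplication modules would carry this equivalence. The first is the radical formula $rad(P)=\sqrt{(P:_{R}M)}M$ \cite{el1988multiplication}, and the second is the cancellation property that $IM\subseteq JM$ implies $I\subseteq J+Ann_{R}(M)$ for ideals $I,J$ of $R$. For the forward implication I would multiply through by $M$ and use the multiplication hypothesis $L=(L:_{R}M)M$ to get $sL=s(L:_{R}M)M\subseteq\sqrt{(P:_{R}M)}M=rad(P)$. For the converse I would read $sL\subseteq rad(P)=\sqrt{(P:_{R}M)}M$ as $s(L:_{R}M)M\subseteq\sqrt{(P:_{R}M)}M$, apply cancellation to obtain $s(L:_{R}M)\subseteq\sqrt{(P:_{R}M)}+Ann_{R}(M)$, and finally note that $Ann_{R}(M)=(0:_{R}M)\subseteq(P:_{R}M)\subseteq\sqrt{(P:_{R}M)}$ is absorbed, leaving $s(L:_{R}M)\subseteq\sqrt{(P:_{R}M)}$, as wanted.

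The one delicate point, and the only place where finite generation is genuinely used, is the converse direction: cancelling $M$ from $s(L:_{R}M)M\subseteq\sqrt{(P:_{R}M)}M$ back to an inclusion of ideals is false for arbitrary multiplication modules and depends on the cancellation law, which in turn rests on a determinant/Nakayama argument valid only when $M$ is finitely generated. The forward direction, the radical formula, and the absorption of $Ann_{R}(M)$ are all routine, so I would concentrate the care on invoking cancellation correctly and otherwise let the equivalence drop out of Corollary~\ref{10}.
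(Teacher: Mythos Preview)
Your proof is correct and follows essentially the same route as the paper: both directions hinge on the identity $rad(P)=\sqrt{(P:_{R}M)}\,M$, with the forward implication obtained by multiplying through by $M$ and the converse being the one place where finite generation is genuinely needed. The only minor variation is that for the converse the paper cites $(rad(P):_{R}M)=\sqrt{(P:_{R}M)}$ from \cite[Theorem~4.4]{mccasland1992prime} instead of your cancellation argument, but since $(\sqrt{(P:_{R}M)}M:_{R}M)=\sqrt{(P:_{R}M)}+Ann_{R}(M)=\sqrt{(P:_{R}M)}$ is exactly what cancellation yields, the two arguments coincide.
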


\begin{proof}
     (a)$\Rightarrow$(b). Assume that $LN \subseteq P$, where $L$ and $N$ are submodules of $M$. By Remark \ref{9} (a), $LN=(L:_{R}M)N \subseteq P$. Then there exists $s \in S$ so that $s(L:_{R}M) \subseteq \sqrt{(P:_{R}M)}$ or $sN \subseteq P$ by Theorem \ref{1} (d). Since $M$ is multiplication, by \cite[Theorem 2.12]{el1988multiplication}, we have $s(L:_{R}M)M=sL \subseteq \sqrt{(P:_{R}M)}M=rad(P)$ or $sN \subseteq P$.

     (b)$\Rightarrow$(a). Assume that $JN \subseteq P$, where $N$ is a submodule of $M$ and $J$ is an ideal of $R$. Set $K:=JM$. As $M$ is a multiplication module, Then we have
     $$KN=(K:_{R}M)(N:_{R}M)M=J(N:_{R}M)M=JN \subseteq P.$$
     By assumption, there exists $s \in S$ so that $sK \subseteq rad(P)$ or $sN \subseteq P$. As $M$ is finitely generated, by \cite[Thoerem 4.4]{mccasland1992prime}, $sK \subseteq rad(P)$ implies that
     $$sJ \subseteq (sK:_{R}M) \subseteq (rad(P):_{R}M)= \sqrt{(P:_{R}M)}.$$
     Therefore $P$ is an $S$-primary submodule of $M$ by Corollary \ref{10}.

  \end{proof}

  \begin{rem} \label{12}

          \begin {itemize}

          \item[(a)] Let $M$ be an $R$-module and $p$ be a maximal ideal of $R$. In \cite{el1988multiplication}, $T_{p}(M)$ is defined as follows
                       $$T_{p}(M) = \{\,\, m \in M |\,\,\, (1-r)m =0 \,\, \text{for some} \,\,\, r \in P\}.$$
          	Clearly $T_{p}(M)$ is a submodule of $M$. An $R$-module $M$ is said to be $p$-\textit{cyclic} provided there exist $q \in p$ and $m \in M$ such that $(1-q)M \subseteq Rm$ \cite{el1988multiplication}.

          \item[(b)] Let $M$ be an $R$-module. Then $M$ is a multiplication $R$-module if and only if for every maximal ideal $p$ of $R$ either $M = T_{p}(M)$ or $M$ is $p$-cyclic \cite[Theorem 1.2]{el1988multiplication}.

          \end {itemize}

  \end{rem}

 \begin{lem}\label{13}

        Let $S$ be a multiplicatively closed subset of $R$, $p$ be an $S$-primary (resp. $S$-prime) ideal of $R$ and $M$ be a faithful multiplication $R$-module. Then there exists a fixed $s \in S$ and whenever $am \in pM$, where $a \in R$ and $m \in M$, implies that $sa \in \sqrt{p}$ (resp. $sa \in p $) or $sm \in pM$.

\end{lem}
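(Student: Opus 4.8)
The plan is to prove Lemma \ref{13} by exploiting the hypothesis that $M$ is a \emph{faithful multiplication} module, which lets me translate statements about submodules of $M$ into statements about ideals of $R$. First I would observe that since $p$ is an $S$-primary (resp. $S$-prime) ideal of $R$, by definition there exists a fixed $s \in S$ such that whenever $ab \in p$ with $a, b \in R$, then $sa \in \sqrt{p}$ (resp. $sa \in p$) or $sb \in p$; this is the $s$ that will work uniformly throughout. The key technical fact I would invoke is that for a faithful multiplication module, $pM$ is a submodule whose colon behaves well: in particular $(pM :_R M) = p$ when $M$ is faithful multiplication, and more generally the product of submodules from Remark \ref{9}(a) and the cancellation-type results of \cite{el1988multiplication} apply.

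The heart of the argument is to reduce $am \in pM$ to an ideal-theoretic membership. Since $M$ is multiplication, I would write $Rm = (Rm :_R M)M$, so that $a\,Rm = a(Rm:_R M)M \subseteq pM$. Setting $J = a(Rm :_R M)$, this gives $JM \subseteq pM$, and because $M$ is faithful multiplication one has the cancellation $J \subseteq (pM:_R M) = p$ (here I would cite the relevant cancellation theorem from \cite{el1988multiplication}). Thus $a(Rm :_R M) \subseteq p$. Now I would apply the $S$-primary property of the ideal $p$ together with Theorem \ref{1}(d) at the level of $R$-module $R$: from $a \cdot (Rm:_R M) \subseteq p$, with the fixed $s$, I conclude $sa \in \sqrt{p}$ (resp. $sa \in p$) or $s(Rm :_R M) \subseteq p$. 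In the first case we are done. In the second case, $s(Rm :_R M) \subseteq p$ gives $s\,Rm = s(Rm:_R M)M \subseteq pM$, hence $sm \in pM$, which is the desired conclusion.

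The main obstacle I anticipate is justifying the cancellation step $JM \subseteq pM \Rightarrow J \subseteq p$ cleanly. This is exactly where faithfulness is essential: for a faithful multiplication module, $M$ is a \emph{cancellation module} (equivalently $IM \subseteq JM$ forces $I \subseteq J$ for ideals $I, J$), a fact established in \cite{el1988multiplication}. I would need to make sure I apply this in the correct direction and that I correctly identify $(pM :_R M) = p$; a faithful multiplication module satisfies $(IM :_R M) = I$ for every ideal $I$, so this is immediate once I have the cancellation result. A secondary point of care is ensuring a \emph{single} $s$ works for all $a$ and $m$ simultaneously — but since the $s$ comes from the $S$-primary (resp. $S$-prime) property of the fixed ideal $p$, it is independent of the particular $a$ and $m$, so uniformity is automatic. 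The $S$-prime case runs identically, replacing $\sqrt{p}$ by $p$ throughout and using the $S$-prime rather than $S$-primary property of $p$.
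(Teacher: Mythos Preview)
Your argument has a genuine gap at the cancellation step. The claim that a faithful multiplication module satisfies $(IM:_{R}M)=I$ for every ideal $I$ (equivalently, $JM\subseteq pM\Rightarrow J\subseteq p$) is \emph{not} true without the additional hypothesis that $M$ is finitely generated. The cancellation results in \cite{el1988multiplication} that you invoke are stated for \emph{finitely generated} faithful multiplication modules, and the hypothesis cannot be dropped. For a concrete failure, take $R=\prod_{i\in\mathbb{N}}\mathbb{Z}/2\mathbb{Z}$ and $M=\bigoplus_{i\in\mathbb{N}}\mathbb{Z}/2\mathbb{Z}$ viewed as an ideal of $R$; one checks that $M$ is a faithful multiplication $R$-module, yet $MM=M=RM$ while $M\neq R$, so $(MM:_{R}M)=R\neq M$. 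Thus your reduction $a(Rm:_{R}M)M\subseteq pM\Rightarrow a(Rm:_{R}M)\subseteq p$ is unjustified in the generality of the lemma. Note that the sentence immediately following Lemma~\ref{13} in the paper says explicitly that the point of this lemma is to \emph{remove} the finitely generated assumption from \cite[Theorem~2.11]{sevim2019s}; your route would reinstate it.

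The paper avoids cancellation entirely by arguing locally at a maximal ideal. Fixing the $s\in S$ coming from the $S$-primary property of $p$ and assuming $sa\notin\sqrt{p}$, one sets $K=(pM:_{R}sm)$ and supposes $K\subsetneq R$, so $K\subseteq Q$ for some maximal ideal $Q$. Using the characterization of multiplication modules in Remark~\ref{12}(b), one rules out $m\in T_{Q}(M)$ and then uses $Q$-cyclicity, $(1-q)M\subseteq Rm'$, to produce an element-level relation $(1-q)as'=(1-q)p'\in p$ with $s'\in R$, $p'\in p$. Faithfulness enters only to kill $(1-q)\mathrm{Ann}_{R}(m')$. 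Applying the $S$-primary condition to $(1-q)as'\in p$ then forces $ss'\in p$, $sa\in\sqrt{p}$, or $s(1-q)^{n}\in p$, each of which contradicts $p\subseteq K\subseteq Q$; hence $K=R$ and $sm\in pM$. If you want to salvage your global approach, you would need an independent proof that $(pM:_{R}M)=p$ for faithful multiplication $M$ without finite generation, and no such result is available in \cite{el1988multiplication}.
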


\begin{proof}
    It is enough to prove it for $S$-primary submodules. The technique is similar for $S$-prime.
	As $p$ is an $S$-primary ideal, there exists $s \in S$, whenever $rr^{\prime} \in p$, where $r, r^{\prime} \in R$, then $sr \in \sqrt{p}$ or $sr^{\prime} \in p$. Let $a \in R$ and $m \in M$ satisfy $am \in pM$. Suppose $sa \notin \sqrt{p}$. Set $K:=(pM:_{R}sm)$. Assume that $K\neq R$. Then there exists a maximal ideal $Q$ of $R$ so that $K \subseteq Q$. $m \notin T_{Q}(M)$, since otherwise, there exists $q \in Q$ such that $(1-q)m=0$ and so $(1-q)sm=0$. This implies that $(1-q) \in K \subseteq Q$, a contradiction. By \cite[Theorem 1.2]{el1988multiplication} $M$ is $Q$-cyclic, that is there exist $m^{\prime}\in M$ and $q \in Q$ such that $(1-q)M \subseteq Rm^{\prime}$. In particular, $(1-q)m=s^{\prime}m^{\prime}$, $(1-q)am=p^{\prime}m^{\prime}$ for some $s^{\prime} \in R$ and $p^{\prime} \in p$. Thus $(as^{\prime}-p^{\prime})m^{\prime}=0$. Now $(1-q)(Ann_{R}(m^{\prime}))M \subseteq (Ann_{R}(m^{\prime}))Rm^{\prime}=\mathbf{0}$ implies $(1-q)Ann_{R}(m) \subseteq Ann_{R}(M)=\mathbf{0}$, because $M$ is faithful, and hence $(1-q)as^{\prime}=(1-q)p^{\prime} \in p$. As $p$ is an $S$-primary ideal, $ss^{\prime} \in p$ or $sa \in \sqrt{p}$ or $s(1-q)^{n} \in p$ for some $n \in \mathbb{N}$.  But $p \subseteq K \subseteq Q$ so that in each case, we have a contradiction. It follows that $K=R$ and $sm \in pM$, as required.
\end{proof}

                In the following Theorem, The Theorem 2.11 in \cite{sevim2019s} will be extended by removing the condition ``finitely generated".

  \begin{thm} \label{14}
                Let $M$ be a multiplication $R$-module and $P$ a submodule of $M$ provided that $(P:_{R}M) \cap S= \emptyset$, where $S$ is a multiplicatively closed subset of $R$. Then the following are equivalent:

  \begin {itemize}

    \item [(a)] $P$ is an $S$-primary (resp. $S$-prime) submodule of $M$.

    \item [(b)] $(P:_{R}M)$ is an $S$-primary (resp. $S$-prime) ideal of $R$.

    \item [(c)] $P=IM$ for some $S$-primary (resp. $S$-prime) ideal $I$ of $R$ with $Ann(M) \subseteq I$.

    \end {itemize}

  \end{thm}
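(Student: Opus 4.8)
The plan is to establish the cycle of implications (a)$\Rightarrow$(b)$\Rightarrow$(c)$\Rightarrow$(a), carrying the $S$-primary and $S$-prime statements along in parallel; the two cases differ only in replacing the radicals $\sqrt{(P:_{R}M)}$ and $\sqrt{I}$ by $(P:_{R}M)$ and $I$, and each step below works verbatim in both. For (a)$\Rightarrow$(b) I would simply invoke Proposition \ref{8}(a) for the $S$-primary case, and observe that its argument carries over word for word to the $S$-prime case: if $xy\in(P:_{R}M)$ then $xym\in P$ for every $m\in M$, and the fixed $s$ coming from the $S$-primeness of $P$ forces $sx\in(P:_{R}M)$ or, when that fails, $sym\in P$ for all $m$, i.e. $sy\in(P:_{R}M)$.

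For (b)$\Rightarrow$(c) I would set $I:=(P:_{R}M)$. Since $M$ is a multiplication module, every submodule satisfies $P=(P:_{R}M)M$, so $P=IM$; by (b) the ideal $I$ is $S$-primary (resp. $S$-prime), and $Ann(M)=(0:_{R}M)\subseteq(P:_{R}M)=I$. Thus (c) holds with this choice of $I$.

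The substantial step is (c)$\Rightarrow$(a), and the main obstacle is that Lemma \ref{13} is stated only for \emph{faithful} multiplication modules, whereas here $M$ is merely multiplication. I would remove this gap by passing to $\bar{R}:=R/Ann(M)$ and regarding $M$ as the faithful multiplication $\bar{R}$-module $\bar{M}$. Before applying Lemma \ref{13} one must verify three points: (i) the image $\bar{S}$ of $S$ in $\bar{R}$ is again multiplicatively closed, which holds because $Ann(M)\subseteq(P:_{R}M)$ together with $(P:_{R}M)\cap S=\emptyset$ forces $Ann(M)\cap S=\emptyset$, so no element of $S$ is killed in $\bar{R}$; (ii) $\bar{I}:=I/Ann(M)$ is an $\bar{S}$-primary (resp. $\bar{S}$-prime) ideal of $\bar{R}$, which follows since $Ann(M)\subseteq I$ makes both the radical and the quotient commute with reduction modulo $Ann(M)$; and (iii) $\bar{I}\bar{M}=IM=P$ as subsets of $M$. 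Lemma \ref{13} then supplies a fixed $s\in S$ such that $am\in P$ (for $a\in R$, $m\in M$) yields $\bar{s}\bar{a}\in\sqrt{\bar{I}}$ (resp. $\bar{s}\bar{a}\in\bar{I}$) or $\bar{s}m\in P$.

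Pulling these back to $R$ and using $\sqrt{I}\subseteq\sqrt{(IM:_{R}M)}=\sqrt{(P:_{R}M)}$ (resp. $I\subseteq(P:_{R}M)$) gives $sa\in\sqrt{(P:_{R}M)}$ (resp. $sa\in(P:_{R}M)$) or $sm\in P$; combined with the standing hypothesis $(P:_{R}M)\cap S=\emptyset$, this is exactly the assertion that $P$ is $S$-primary (resp. $S$-prime), closing the cycle. I expect the bookkeeping in (i)--(iii)---in particular confirming that $\bar{S}$ remains genuinely multiplicatively closed and that radicals and colons survive the quotient $R\to\bar{R}$---to be the only delicate part; once the faithful reduction is in place, Lemma \ref{13} does the real work.
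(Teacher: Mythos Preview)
Your proposal is correct and follows essentially the same route as the paper: invoke Proposition~\ref{8}(a) for (a)$\Rightarrow$(b), take $I=(P:_{R}M)$ for (b)$\Rightarrow$(c), and for (c)$\Rightarrow$(a) pass to $\bar{R}=R/Ann(M)$ so that $M$ becomes faithful and Lemma~\ref{13} applies. The only cosmetic difference is that the paper packages your checks (i)--(ii) as an appeal to Corollary~\ref{7}(b) (applied to the $R$-module $R$) rather than verifying them by hand.
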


  \begin{proof}
        (a)$\Rightarrow$(b). It is clear from Proposition \ref{8} (a).

        (b)$\Rightarrow$(c). It is clear.

        (c)$\Rightarrow$(a). As $M$ is a faithful multiplication $R/Ann_{R}(M)$-module, by Corollary \ref{7} (b), $I/Ann_{R}(M)$ is an $S$-primary (resp. $S$-prime) ideal of $R/Ann_{R}(M)$. Hence $P=IM$ is an $S$-primary (resp. $S$-prime) submodule of $R/Ann_{R}(M)$-module $M$ by Lemma \ref{13}. Therefore, $P$ is an $S$-primary (resp. $S$-prime) submodule of $R$-module $M$, as required.

  \end{proof}

  \begin{prop} \label{15}

       Let $P$ be an $S$-primary submodule of multiplication $R$-module $M$. Suppose that $N \cap L \subseteq P$ for some submodules $N$ and $L$ of $M$. Then $sN \subseteq P$ or $sL \subseteq rad(P)$ for some $s \in S$.

  \end{prop}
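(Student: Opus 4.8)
The plan is to turn the intersection hypothesis $N \cap L \subseteq P$ into a \emph{product} containment and then invoke the product characterization of $S$-primary submodules from Corollary \ref{10}. The point is that in a multiplication module the product $NL$ (Remark \ref{9}(a)) always sits inside $N \cap L$, so the given hypothesis is already strong enough to trigger the $S$-primary condition on the pair $(N,L)$.

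Concretely, I would first record that, since $M$ is multiplication, every submodule satisfies $N=(N:_{R}M)M$ and $L=(L:_{R}M)M$. Hence
$$NL=(N:_{R}M)(L:_{R}M)M\subseteq (N:_{R}M)M\cap (L:_{R}M)M=N\cap L,$$
the inclusion coming from $(N:_{R}M)(L:_{R}M)\subseteq (N:_{R}M)$ and $(N:_{R}M)(L:_{R}M)\subseteq (L:_{R}M)$. Combined with the hypothesis $N\cap L\subseteq P$ this gives $NL\subseteq P$, equivalently $LN\subseteq P$ since the product is symmetric.

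Now I would apply Corollary \ref{10}(b): because $P$ is $S$-primary and $(P:_{R}M)\cap S=\emptyset$, there is a fixed $s\in S$ such that $LN\subseteq P$ forces $s(L:_{R}M)\subseteq\sqrt{(P:_{R}M)}$ or $sN\subseteq P$. In the second case we are done immediately. In the first case I would multiply through by $M$ and use $sL=s(L:_{R}M)M$ to get
$$sL=s(L:_{R}M)M\subseteq \sqrt{(P:_{R}M)}M\subseteq rad(P),$$
so $sL\subseteq rad(P)$. Either branch yields the asserted dichotomy $sN\subseteq P$ or $sL\subseteq rad(P)$.

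The only step needing care is the last inclusion $\sqrt{(P:_{R}M)}M\subseteq rad(P)$. Usefully, I would not need the full radical identity $rad(P)=\sqrt{(P:_{R}M)}M$ here, but only this easy direction, which holds in any module: for each prime submodule $Q\supseteq P$ the ideal $(Q:_{R}M)$ is prime and contains $(P:_{R}M)$, whence $\sqrt{(P:_{R}M)}\subseteq (Q:_{R}M)$ and so $\sqrt{(P:_{R}M)}M\subseteq (Q:_{R}M)M\subseteq Q$; intersecting over all such $Q$ gives the claim. Beyond this, the main thing to get right is matching $L$ and $N$ to their respective roles in the statement of Corollary \ref{10}, and I expect no essential obstacle to remain.
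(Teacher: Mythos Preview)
Your proof is correct and follows essentially the same route as the paper's. The paper argues directly from the definition of $S$-primary: assuming $sN\nsubseteq P$, it picks $m'\in N$ with $sm'\notin P$, observes that $am'\in (L:_{R}M)N\subseteq L\cap N\subseteq P$ for every $a\in (L:_{R}M)$, and concludes $s(L:_{R}M)\subseteq\sqrt{(P:_{R}M)}$, hence $sL\subseteq\sqrt{(P:_{R}M)}M=rad(P)$ via \cite[Theorem 2.12]{el1988multiplication}. You package the same computation through Corollary~\ref{10} after first noting $NL\subseteq N\cap L$, which is a clean way to invoke the product criterion; your direct justification of the inclusion $\sqrt{(P:_{R}M)}M\subseteq rad(P)$ is a nice touch since only this direction is needed, whereas the paper appeals to the full equality from the multiplication-module literature.
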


  \begin{proof}
    Since $P$ is an $S$-primary submodule, there exists $s \in S$ such that for every $r \in R$ and $m \in M$, if $rm \in P$, then $sr \in \sqrt{(P:_{R}M)} $ or $sm \in P$. Let $sN \nsubseteq P$. Then $sm^{\prime} \notin P$ for some $m^{\prime} \in N$. Take an element $a \in (L:_{R}M)$. This yields that $am^{\prime} \in (L:_{R}M)N \subseteq L \cap N \subseteq P$. As $P$ is an $S$-primary submodule of $M$ and $sm^{\prime} \notin P$, we can conclude that $sa \in \sqrt{(P:_{R}M)}$ so that $s(L:_{R}M) \subseteq \sqrt{(P:_{R}M)}$. As $M$ is a multiplication module, by \cite[Theorem 2.12]{el1988multiplication} we have
    $$sL=s(L:_{R}M)M \subseteq \sqrt{(P:_{R}M)}M=rad(P).$$

  \end{proof}

     \begin{lem}\label{16}

       Let $R=R_{1}\times R_{2}$ and $S=S_{1} \times S_{2}$ where $S_{i}$ is a multiplicatively closed subset of $R_{i}$. Suppose $p=p_{1} \times p_{2}$ is an ideal of $R$. So the following are equivalent:

       \begin {itemize}

       \item [(a)] $p$ is an $S$-primary ideal of $R$.

       \item [(b)] $p_{1}$ is an $S_{1}$-primary ideal of $R_{1}$ and $p_{2} \cap S_{2}\neq \emptyset$ or $p_{2}$ is an $S_{2}$-primary ideal of $R_{2}$ and $p_{1} \cap S_{1}\neq \emptyset$.

       \end {itemize}

     \end{lem}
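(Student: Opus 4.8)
The plan is to unwind both directions using the product structure, relying on the two elementary identities $\sqrt{p_{1}\times p_{2}}=\sqrt{p_{1}}\times\sqrt{p_{2}}$ and $(p_{1}\times p_{2})\cap(S_{1}\times S_{2})=(p_{1}\cap S_{1})\times(p_{2}\cap S_{2})$. The second identity is the organizing principle: since being $S$-primary forces $p\cap S=\emptyset$, and since a product of sets is empty exactly when one factor is, the hypothesis $p\cap S=\emptyset$ already tells us that \emph{at least one} of $p_{1}\cap S_{1}$, $p_{2}\cap S_{2}$ is empty. The whole proof is then a matter of probing the defining property of $p$ with cleverly chosen test elements whose components are either $0$ or $1$.

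For (b)$\Rightarrow$(a), by the symmetry of the statement I would assume without loss of generality that $p_{1}$ is $S_{1}$-primary with witness $s_{1}\in S_{1}$ and that there exists $t_{2}\in p_{2}\cap S_{2}$. I claim the element $s=(s_{1},t_{2})\in S$ witnesses that $p$ is $S$-primary. Indeed, if $(a_{1},a_{2})(b_{1},b_{2})\in p$ then $a_{1}b_{1}\in p_{1}$, so $s_{1}a_{1}\in\sqrt{p_{1}}$ or $s_{1}b_{1}\in p_{1}$; and since $t_{2}\in p_{2}$ we automatically have $t_{2}a_{2}\in p_{2}\subseteq\sqrt{p_{2}}$ and $t_{2}b_{2}\in p_{2}$. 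In the first case $s(a_{1},a_{2})=(s_{1}a_{1},t_{2}a_{2})\in\sqrt{p_{1}}\times\sqrt{p_{2}}=\sqrt{p}$, and in the second $s(b_{1},b_{2})=(s_{1}b_{1},t_{2}b_{2})\in p_{1}\times p_{2}=p$. Disjointness $p\cap S=\emptyset$ holds because $p_{1}\cap S_{1}=\emptyset$. So the second coordinate, being swallowed by $t_{2}\in p_{2}$, plays no role, and the argument reduces cleanly to the $S_{1}$-primariness of $p_{1}$.

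For (a)$\Rightarrow$(b), fix a witness $s=(s_{1},s_{2})\in S$ for $p$. The first step is to rule out the degenerate case in which \emph{both} $p_{1}\cap S_{1}=\emptyset$ and $p_{2}\cap S_{2}=\emptyset$; this is the step I expect to be the main obstacle, since the two disjuncts of (b) each demand that one factor meet its multiplicative set, so this case must be shown impossible. I would test the product $(1,0)(0,1)=(0,0)\in p$: the defining property gives $s(1,0)=(s_{1},0)\in\sqrt{p}$ or $s(0,1)=(0,s_{2})\in p$, i.e.\ $s_{1}\in\sqrt{p_{1}}$ or $s_{2}\in p_{2}$. If $s_{2}\in p_{2}$ then $s_{2}\in p_{2}\cap S_{2}$, a contradiction; if $s_{1}\in\sqrt{p_{1}}$ then $s_{1}^{n}\in p_{1}$ for some $n$, and $s_{1}^{n}\in S_{1}$ since $S_{1}$ is multiplicatively closed, giving $s_{1}^{n}\in p_{1}\cap S_{1}$, again a contradiction. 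Hence exactly one factor is disjoint from its multiplicative set.

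It then remains to extract $S_{i}$-primariness of the surviving factor. Assuming (again by symmetry) that $p_{1}\cap S_{1}=\emptyset$ and $p_{2}\cap S_{2}\neq\emptyset$, I would show $s_{1}$ witnesses $p_{1}$: given $a_{1}b_{1}\in p_{1}$, apply the property of $p$ to $(a_{1},0)(b_{1},0)=(a_{1}b_{1},0)\in p$, obtaining $s_{1}a_{1}\in\sqrt{p_{1}}$ or $s_{1}b_{1}\in p_{1}$. Together with $p_{2}\cap S_{2}\neq\emptyset$ this yields the first disjunct of (b), and the case $p_{2}\cap S_{2}=\emptyset$, $p_{1}\cap S_{1}\neq\emptyset$ yields the second by the same reasoning with the roles of the factors exchanged. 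This completes the equivalence.
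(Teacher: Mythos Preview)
Your argument is correct and follows essentially the same route as the paper: both directions hinge on the identities $\sqrt{p_{1}\times p_{2}}=\sqrt{p_{1}}\times\sqrt{p_{2}}$ and $(p_{1}\times p_{2})\cap(S_{1}\times S_{2})=(p_{1}\cap S_{1})\times(p_{2}\cap S_{2})$, the test element $(1,0)(0,1)=(0,0)$ to force one factor to meet its multiplicative set, and then products of the form $(a,0)(b,0)$ (or $(0,a)(0,b)$) to extract $S_{i}$-primariness of the remaining coordinate with the same fixed witness. The only differences are cosmetic (you phrase the first step of (a)$\Rightarrow$(b) as ruling out a degenerate case by contradiction, and you spell out explicitly that $s_{1}\in\sqrt{p_{1}}$ gives $s_{1}^{n}\in p_{1}\cap S_{1}$), but the structure and the key ideas are identical.
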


  \begin{proof}
 	(a)$\Rightarrow$(b). Since $(1,0)(0,1)=(0,0) \in p$, there exists $s=(s_{1},s_{2}) \in S$ so that $s(1,0)=(s_{1},0) \in \sqrt{p}$ or $s(0,1)=(0,s_{2}) \in p$ and thus $p_{1} \cap S_{1} \neq \emptyset$ or $p_{2} \cap S_{2}\neq \emptyset$. We may assume that $p_{1} \cap S_{1} \neq \emptyset$. As $P \cap S=\emptyset$, we have $p_{2} \cap S_{2}=\emptyset$. Let $xy \in p_{2}$ for some $x,y \in R_{2}$. Since $(0,x)(0,y) \in p$ and $p$ is an $S$-primary ideal of $R$. We get either $s(0,x)=(0,s_{2}x) \in \sqrt{p}$ or $s(0,y)=(0,s_{2}y) \in p$ and this yields $s_{2}x \in \sqrt{p_{2}}$ or $s_{2}y \in p_{2}$. Therefore, $p_{2}$ is an $S$-primary ideal of $R_{2}$. In the other case, one can easily show that $p_{1}$ is an $S$-primary ideal of $R_{1}$.
 	
 	(b)$\Rightarrow$(a). Assume that $p_{1} \cap S_{1}\neq \emptyset$ and $p_{2}$ is an $S$-primary ideal of $R_{2}$. Then there exists $s_{1} \in p_{1} \cap S_{1}$. Let $(a,b)(c,d)=(ac,bd) \in p$ for some $a, c \in R_{1}$ and $b,d \in R_{2}$. This yields that $bd \in p_{2}$ and thus there exists $s_{2} \in S_{2}$ so that $s_{2}b \in \sqrt{p_{2}}$ or $s_{2}d \in p_{2}$. Put $s=(s_{1},s_{2}) \in S$. Then note that $s(a,b)=(s_{1}a,s_{2}b) \in \sqrt{p}$ or $s(c,d) \in p$. Therefore, $p$ is an $S$-primary ideal of $R$. In other case, one can similarly prove that $p$ is an $S$-primary ideal of $R$.
 	
 \end{proof}

 \begin{thm}\label{17}
	
	Suppose that $M=M_{1}\times M_{2}$ and $R=R_{1} \times R_{2}$-module and $S=S_{1} \times S_{2}$ is a multiplicatively closed subset of $R$, where $M_{i}$ is a $R_{i}$-module and  $S_{i}$ is a multiplicatively closed subset of $R_{i}$ for each $i=1, 2$. Assume $P=P_{1} \times P_{2}$ is a submodule of $M$. Then the following are equivalent:
	
	\begin {itemize}
	
	\item [(a)] $P$ is an  $S$-primary submodule of $M$.
	
	\item [(b)] $P_{1}$ is an $S_{1}$-primary submodule of $M_{1}$ and $(P_{2}:_{R_{2}}M_{2}) \cap S_{2}\neq \emptyset$ or $P_{2}$ is an $S_{2}$-primary submodule of $M_{2}$ and $(P_{1}:_{R_{1}}M_{1}) \cap S_{1}\neq \emptyset$.
	
	\end {itemize}
	
\end{thm}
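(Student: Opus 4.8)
The plan is to prove the two directions separately, exploiting the fact that the module $M = M_1 \times M_2$ over $R = R_1 \times R_2$ decomposes coordinate-wise, and that submodules and residuals respect this decomposition. The key structural observation I would record first is that
$$
(P :_R M) = (P_1 :_{R_1} M_1) \times (P_2 :_{R_2} M_2),
$$
so that $(P:_R M) \cap S = \emptyset$ forces at least one coordinate residual to be disjoint from its $S_i$, and that taking radicals also splits: $\sqrt{(P:_R M)} = \sqrt{(P_1:_{R_1}M_1)} \times \sqrt{(P_2:_{R_2}M_2)}$. This lets me translate the $S$-primary condition on $P$ into coordinate statements, paralleling exactly the ideal-level decomposition already established in Lemma \ref{16}.

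For (a)$\Rightarrow$(b), I would mimic the opening move of Lemma \ref{16}. Since $(1_{R_1},0)$ acting on any element $(m_1, m_2)$ with $m_2 \in M_2$ lands a suitable test element in $P$, the idea is to feed the $S$-primary hypothesis the relation $(1,0)\cdot(0,m_2) \in P$ for $m_2 \in M_2$ (using $P_1 = M_1$ is false in general, so instead I would use that $(1,0)(0,m_2)=(0,0)\in P$ only when the first coordinate is the unit and the product vanishes). More carefully, picking $e_1=(1,0)$ and an element $m=(0,m_2)$ gives $e_1 m = 0 \in P$, so the existence of $s=(s_1,s_2)\in S$ yields $s e_1 = (s_1,0)\in\sqrt{(P:_R M)}$ or $s m = (0,s_2 m_2)\in P$; this produces the dichotomy $(P_1:_{R_1}M_1)\cap S_1\neq\emptyset$ or $s_2 M_2 \subseteq P_2$, i.e.\ $(P_2:_{R_2}M_2)\cap S_2\neq\emptyset$. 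Assuming without loss the first alternative, I then show $P_2$ is $S_2$-primary by running the $S$-primary test on elements of the form $(0,r_2)(0,m_2)=(0,r_2m_2)\in P$: the hypothesis on $P$ gives $s_2 r_2 \in \sqrt{(P_2:_{R_2}M_2)}$ or $s_2 m_2 \in P_2$, which is exactly $S_2$-primality with witness $s_2$.

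For (b)$\Rightarrow$(a), assume say $(P_1:_{R_1}M_1)\cap S_1\neq\emptyset$ with witness $s_1$, and $P_2$ is $S_2$-primary with witness $s_2$. Set $s=(s_1,s_2)\in S$. Given $(r_1,r_2)(m_1,m_2)=(r_1m_1, r_2m_2)\in P$, the second coordinate gives $r_2 m_2 \in P_2$, so $S_2$-primality yields $s_2 r_2 \in \sqrt{(P_2:_{R_2}M_2)}$ or $s_2 m_2 \in P_2$. In the first case $s(r_1,r_2)=(s_1 r_1, s_2 r_2)$ lies in $\sqrt{(P_1:_{R_1}M_1)}\times\sqrt{(P_2:_{R_2}M_2)}=\sqrt{(P:_R M)}$, because $s_1\in (P_1:_{R_1}M_1)\subseteq\sqrt{(P_1:_{R_1}M_1)}$ forces $s_1 r_1$ into that radical; in the second case $s(m_1,m_2)=(s_1m_1,s_2m_2)\in P$ since $s_1 m_1 \in s_1 M_1 \subseteq P_1$. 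Either way the $S$-primary condition holds with witness $s$, and the symmetric hypothesis is handled identically.

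The main obstacle I anticipate is the bookkeeping in (a)$\Rightarrow$(b): one must be careful that the \emph{same} witness $s$ from the $S$-primary hypothesis on $P$ serves simultaneously to produce the non-empty intersection in one coordinate and to certify $S_i$-primality in the other, rather than silently switching witnesses between the two sub-claims. As long as I fix $s=(s_1,s_2)$ once at the start and track each coordinate's role, the argument is a faithful module-level transcription of Lemma \ref{16}, and no genuinely new difficulty arises beyond verifying that the radical and residual decompositions commute with the product structure.
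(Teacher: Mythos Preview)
Your proposal is correct and follows essentially the same approach as the paper. The only cosmetic difference is that for the dichotomy in (a)$\Rightarrow$(b) the paper invokes Proposition~\ref{8} and Lemma~\ref{16} (passing through the $S$-primary ideal $(P:_RM)$), whereas you argue it directly at the module level via $(1,0)\cdot(0,m_2)=0\in P$; and in verifying the $S_2$-primality of $P_2$ the paper tests with $(1,r)(0,m)$ while you use $(0,r_2)(0,m_2)$---both work and your care about fixing the single witness $s=(s_1,s_2)$ upfront is exactly right.
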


\begin{proof}
	(a)$\Rightarrow$(b). By Proposition \ref{8}, $(P:_{R}M)=(P_{1}:_{R_{1}}M_{1})\times (P_{2}:_{R_{2}}M_{2})$ is an $S$-primary ideal of $R$ and so by Lemma \ref{16}, either $(P_{1}:_{R_{1}}M_{1}) \cap S_{1}\neq \emptyset$ or $(P_{2}:_{R_{2}}M_{2}) \cap S_{2}\neq \emptyset$. We may assume that $(P_{1}:_{R_{1}}M_{1}) \cap S_{1}\neq \emptyset$. Now we will show that $P_{2}$ is an $S_{2}$-primary submodule of $M_{2}$. Let $rm \in P_{2}$ for some $r \in R_{2}$ and $m \in M_{2}$. Then $(1,r)(0,m)=(0,rm) \in P$. As $P$ is an $S$-primary, there is an $s=(s_{1},s_{2}) \in S$ so that $s(1,r)=(s_{1},s_{2}r) \in \sqrt{(P:_{R}M)}$ or $s(0:_{R}m)=(0,s_{2}m) \in P$. This implies that $s_{2}r \in \sqrt{(P_{2}:_{R_{2}}M_{2})}$ or $s_{2}m \in P_{2}$. Therefore, $P_{2}$ is an $S_{2}$ is an $S_{2}$-primary submodule of $M_{2}$. In the other case, it can be similarly show that $P_{1}$ is an $S_{1}$-primary submodule of $M_{1}$.
	
	(b)$\Rightarrow$(a). Assume that $(P_{1}:_{R_{1}}M_{1}) \cap S_{1}\neq \emptyset$ and $P_{2}$ is an $S_{2}$-primary submodule of $M_{2}$. Then there exists $s_{1} \in (P_{1}:_{R_{1}}M_{1}) \cap S_{1}$. Let $(r_{1},r_{2})(m_{1},m_{2})=(r_{1}m_{1},r_{2}m_{2}) \in P$ for some $r_{i} \in R_{i}$ and $m_{i} \in M_{i}$, where $i=1,2$. Then $r_{2}m_{2} \in P_{2}$. As $P_{2}$ is an $S_{2}$-primary submodule of $M_{2}$, there is an $s_{2} \in S_{2}$ so that $s_{2}r_{2} \in \sqrt{(P_{2}:_{R_{2}}M_{2})}$ or $s_{2}m_{2} \in P_{2}$. Now put $s=(s_{1},s_{2}) \in S$. Then note that $s(r_{1},r_{2})=(s_{1}r_{1},s_{2}r_{2}) \in  \sqrt{(P:_{R}M)}$ or $s(m_{1},m_{2})=(s_{1}m_{1},s_{2}m_{2}) \in P_{1}\times P_{2}=P$. Therefore, $P$ is an $S$-primary submodule of $M$. Similarly one can show that if $P_{1}$ is an $S_{1}$-primary submodule of $M_{1}$ and $(P_{2}:_{R_{2}}M_{2}) \cap S_{2}\neq \emptyset$, then $P$ is an $S$-primary submodule of $M$.
	
\end{proof}

 \begin{thm}\label{18}
	
	Let $M=M_{1}\times M_{2} \times \cdots M_{n}$ and $R=R_{1} \times R_{2} \times \cdots R_{n}$-module and $S=S_{1} \times S_{2} \times \cdots S_{n}$ is a multiplicatively closed subset of $R$, where $M_{i}$ is a $R_{i}$-module and  $S_{i}$ is a multiplicatively closed subset of $R_{i}$ for each $i=1, 2, \cdots, n$. Assume $P=P_{1} \times P_{2} \times \cdots P_{n}$ is a submodule of $M$. Then the following are equivalent:
	
	\begin {itemize}
	
	\item [(a)] $P$ is an  $S$-primary submodule of $M$.
	
	\item [(b)] $P_{i}$ is an $S_{i}$-primary submodule of $M_{i}$ for some $i\in \{1, 2, \cdots, n\}$ and $(P_{j}:_{R_{j}}M_{j}) \cap S_{j}\neq \emptyset$ for all $j \in \{1,2, \cdots, n\}-\{i\}$.
	
	\end {itemize}
	
\end{thm}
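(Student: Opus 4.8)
The plan is to proceed by induction on $n$, using Theorem \ref{17} as both the base case and the engine of the inductive step. The case $n=2$ is exactly Theorem \ref{17}. For the inductive step I would assume $n \geq 3$ and that the statement holds for all products of fewer than $n$ factors.

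The first move is to regroup the decomposition, separating the initial factor from the rest: set $M' = M_2 \times \cdots \times M_n$, viewed as a module over $R' = R_2 \times \cdots \times R_n$, with $S' = S_2 \times \cdots \times S_n$ a multiplicatively closed subset of $R'$ and $P' = P_2 \times \cdots \times P_n$ a submodule of $M'$. Then $M = M_1 \times M'$ is an $R_1 \times R'$-module, $S = S_1 \times S'$, and $P = P_1 \times P'$, so Theorem \ref{17} applies to this two-factor decomposition. It yields that $P$ is $S$-primary if and only if either \emph{(i)} $P_1$ is $S_1$-primary in $M_1$ and $(P' :_{R'} M') \cap S' \neq \emptyset$, or \emph{(ii)} $P'$ is $S'$-primary in $M'$ and $(P_1 :_{R_1} M_1) \cap S_1 \neq \emptyset$.

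The key bookkeeping step is to unpack these two conditions. Since colon ideals in a finite product ring decompose componentwise, $(P' :_{R'} M') = \prod_{j=2}^{n}(P_j :_{R_j} M_j)$, and because $S' = \prod_{j=2}^{n} S_j$ is a product of multiplicatively closed sets, the condition $(P' :_{R'} M') \cap S' \neq \emptyset$ is equivalent to $(P_j :_{R_j} M_j) \cap S_j \neq \emptyset$ holding for every $j \in \{2, \ldots, n\}$ (an element of the intersection exists precisely when it exists in each coordinate). Thus case \emph{(i)} says exactly that condition (b) holds with $i=1$. For case \emph{(ii)}, I would apply the induction hypothesis to $P'$: it is $S'$-primary in $M'$ if and only if $P_i$ is $S_i$-primary for some $i \in \{2, \ldots, n\}$ and $(P_j :_{R_j} M_j) \cap S_j \neq \emptyset$ for all $j \in \{2,\ldots,n\}\setminus\{i\}$. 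Combining this with the extra requirement $(P_1 :_{R_1} M_1) \cap S_1 \neq \emptyset$ coming from case \emph{(ii)} gives precisely condition (b) for some $i \in \{2,\ldots,n\}$. Taking the disjunction of \emph{(i)} and \emph{(ii)} then yields condition (b) for some $i \in \{1,\ldots,n\}$, which closes the induction.

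I expect the only genuine obstacle to be the careful verification that $(P' :_{R'} M') \cap S' \neq \emptyset$ is equivalent to the conjunction of the coordinatewise conditions $(P_j :_{R_j} M_j) \cap S_j \neq \emptyset$; this rests on the elementary facts that the colon functor commutes with finite products and that a product of multiplicatively closed sets meets a product of ideals exactly when each factor does. Everything else is a routine translation between the two-factor statement of Theorem \ref{17} and the $n$-factor indexing, so no module-theoretic input beyond Theorem \ref{17} and the induction hypothesis should be required.
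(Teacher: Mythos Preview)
Your proposal is correct and follows essentially the same induction-on-$n$ argument as the paper, using Theorem~\ref{17} both as the base case and to peel off one factor in the inductive step. The only cosmetic difference is that you split off the first factor $M_1$ while the paper splits off the last factor $M_n$; your explicit unpacking of $(P':_{R'}M') \cap S' \neq \emptyset$ into coordinatewise conditions is actually more careful than the paper's treatment.
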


\begin{proof}
	We apply induction on $n$. For $n=1$, the result is true. If $n=2$, then (a)$\Leftrightarrow$(b) follows from Theorem \ref{17}. Assume that (a) and (b) are equivalent when $k<n$. Now, we shall prove (a)$\Leftrightarrow$(b) when $k=n$. Let $P=P_{1}\times P_{2}\times ... \times P_{n}$. Put $P^{\prime}=P_{1}\times P_{2}\times ... \times P_{n-1}$ and $S^{\prime}=S_{1}\times S_{2}\times ... \times S_{n-1}$. Then by Theorem \ref{17}, the necessary and sufficient condition for $P=P^{\prime}\times P_{n}$ is an $S$-primary submodule of $M$ is that $P^{\prime}$ is an $S$-primary submodule of $M^{\prime}$ and $(P_{n}:_{R_{n}}M_{n}) \cap S_{n}\neq \emptyset$ or $P_{n}$ is an $S$-primary submodule of $M_{n}$ and $(P^{\prime}:_{R^{\prime}}M^{\prime}) \cap S^{\prime}\neq \emptyset$, where $M^{\prime}=M_{1}\times M_{2}\times ... \times M_{n-1}$ and $R^{\prime}=R_{1}\times R_{2}\times ... \times R_{n-1}$. The rest follows from the induction hypothesis.
	
\end{proof}

\begin{lem}\label{19}
	
	Suppose that $P$ is an $S$-primary submodule of $M$. Then the following statements hold for some $s \in S$.
	
	\begin {itemize}
	
	\item [(a)] $(P:_{M}s^{\prime}) \subseteq (P:_{M}s)$ for all $s^{\prime} \in S$.
	
	\item [(b)]  $((P:_{R}M):_{R}s^{\prime}) \subseteq ((P:_{R}M):_{R}s)$ for all $s^{\prime} \in S$
	
	\end {itemize}
	
\end{lem}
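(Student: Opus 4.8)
The plan is to fix one element $s \in S$ witnessing that $P$ is $S$-primary and to verify that this single $s$ serves for both (a) and (b). The first thing I would establish is the elementary but decisive fact that $\sqrt{(P:_{R}M)} \cap S = \emptyset$: since $S$ is multiplicatively closed it is closed under powers, so if some $x$ lay in $\sqrt{(P:_{R}M)} \cap S$ then $x^{n} \in (P:_{R}M) \cap S$ for some $n \in \mathbb{N}$, contradicting the standing hypothesis $(P:_{R}M) \cap S = \emptyset$. This observation is what lets me discard the ``radical'' branch of the $S$-primary dichotomy every time the relevant ring element happens to lie in $S$.

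For (a), I would fix $s' \in S$ and an element $m \in (P:_{M}s')$, so $s'm \in P$. Viewing this as $s' \cdot m \in P$ and invoking the $S$-primary property of $P$ gives $ss' \in \sqrt{(P:_{R}M)}$ or $sm \in P$. The first option is untenable because $ss' \in S$, so by the first step $sm \in P$, i.e.\ $m \in (P:_{M}s)$, which is exactly the desired inclusion.

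For (b), I would fix $s' \in S$ and $r \in ((P:_{R}M):_{R}s')$, so $rs' \in (P:_{R}M)$, equivalently $rs'M \subseteq P$. For each $m \in M$ I then have $s'(rm) = rs'm \in P$, and applying the $S$-primary property to $s' \cdot (rm) \in P$ yields $ss' \in \sqrt{(P:_{R}M)}$ or $s(rm) \in P$. Again the radical branch is excluded, so $srm \in P$; letting $m$ range over $M$ gives $srM \subseteq P$, that is $sr \in (P:_{R}M)$, which says precisely $r \in ((P:_{R}M):_{R}s)$.

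I do not expect a real obstacle, only two points requiring care. The first is to commit to the same $s$ throughout rather than extracting a fresh witness for each inclusion, which is what makes the joint ``for some $s \in S$'' phrasing hold. The second is the temptation to derive (b) from Proposition \ref{8}(a), which says $(P:_{R}M)$ is an $S$-primary ideal; that route only delivers $sr \in \sqrt{(P:_{R}M)}$, strictly weaker than the needed $sr \in (P:_{R}M)$, so it is cleaner to argue at the level of the submodule as above.
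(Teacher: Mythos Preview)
Your proof is correct and follows essentially the same line as the paper's: fix the $S$-primary witness $s$, apply the defining dichotomy to $s'm\in P$, and rule out the branch $ss'\in\sqrt{(P:_{R}M)}$ using $S\cap\sqrt{(P:_{R}M)}=\emptyset$ (a point you make explicit, while the paper cites only $(P:_{R}M)\cap S=\emptyset$). The only cosmetic difference is in part (b): the paper simply says ``follows from part (a)'' via the identity $((P:_{R}M):_{R}s')=((P:_{M}s'):_{R}M)$, whereas you reprove it elementwise---but your argument is exactly the unwinding of that reduction.
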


\begin{proof}
	\begin {itemize}
	
	\item [(a)] Take an element $m^{\prime} \in (P:_{M}s^{\prime})$, where $s^{\prime} \in S$. Then $s^{\prime}m^{\prime} \in P$. Since $P$ is an $S$-primary submodule of $M$, there exists $s \in S$ such that $ss^{\prime} \in  \sqrt{(P:_{R}M)}$ or $sm^{\prime} \in P$. As $(P:_{R}M) \cap S=\emptyset$, we get $sm^{\prime} \in P$, namely $m^{\prime} \in (P:_{M}s)$.
	
	\item [(b)] Follows from part (a).
	
	\end {itemize}
	
\end{proof}

\begin{prop}\label{20}
	
	Suppose that $M$ is a finitely generated $R$-module, $S$ is a multiplicatively closed subset of $R$, and $P$ is a submodule of $M$ satisfying $(P:_{R}M) \cap S= \emptyset$ . Then the following are equivalent:
	
	\begin {itemize}
	
	\item [(a)] $P$ is an $S$-primary submodule of $M$.
	
	\item [(b)]  $S^{-1}P$ is a primary submodule of $S^{-1}M$ and there is an $s \in S$  satisfying $(P:_{M}s^{\prime}) \subseteq (P:_{M}s)$ for all $s^{\prime} \in S$.
	
	\end {itemize}
	
\end{prop}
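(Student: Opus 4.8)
The forward direction (a)$\Rightarrow$(b) I would dispatch immediately: if $P$ is $S$-primary, then $S^{-1}P$ is a primary submodule of $S^{-1}M$ by Proposition \ref{4}(c), while the stabilizing element $s$ with $(P:_{M}s')\subseteq(P:_{M}s)$ for all $s'\in S$ is furnished directly by Lemma \ref{19}(a). So essentially all the content lies in (b)$\Rightarrow$(a).

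For (b)$\Rightarrow$(a), the plan is to show that the very element $s\in S$ provided by the residual-stabilization hypothesis serves as a witness for $S$-primariness. Fix such an $s$. Given $rm\in P$ with $r\in R$ and $m\in M$, I would pass to the localization: $\frac{r}{1}\cdot\frac{m}{1}=\frac{rm}{1}\in S^{-1}P$, and since $S^{-1}P$ is primary in $S^{-1}M$, either $\frac{m}{1}\in S^{-1}P$ or $\frac{r}{1}\in\sqrt{(S^{-1}P:_{S^{-1}R}S^{-1}M)}$. Here I would invoke the finite generation of $M$ to identify $(S^{-1}P:_{S^{-1}R}S^{-1}M)=S^{-1}(P:_{R}M)$, together with the standard identity $\sqrt{S^{-1}(P:_{R}M)}=S^{-1}\sqrt{(P:_{R}M)}$.

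In the first case, $\frac{m}{1}\in S^{-1}P$ gives $um\in P$ for some $u\in S$, so $m\in(P:_{M}u)\subseteq(P:_{M}s)$ by hypothesis, i.e. $sm\in P$. In the second case, $\frac{r}{1}\in S^{-1}\sqrt{(P:_{R}M)}$ yields $v\in S$ and $k\in\mathbb{N}$ with $vr^{k}M\subseteq P$. Writing $M=Rm_{1}+\cdots+Rm_{n}$, each generator satisfies $r^{k}m_{i}\in(P:_{M}v)\subseteq(P:_{M}s)$, whence $sr^{k}m_{i}\in P$ and so $sr^{k}M\subseteq P$, that is $sr^{k}\in(P:_{R}M)$. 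Then $(sr)^{k}=s^{k-1}(sr^{k})\in(P:_{R}M)$, giving $sr\in\sqrt{(P:_{R}M)}$. Thus the single element $s$ validates the $S$-primary condition for every pair $(r,m)$, as required.

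The step I expect to be the crux is this second case: the localization only tells us that $\frac{r}{1}$ lands in the radical after clearing an auxiliary denominator $v\in S$, so the real work is to strip that $v$ off using finite generation and the uniform residual bound $(P:_{M}v)\subseteq(P:_{M}s)$, and only afterward to absorb the surplus powers of $s$ back into the ideal $(P:_{R}M)$ in order to reach the radical. Finite generation enters twice here — once to localize the colon ideal and once to verify $sr^{k}M\subseteq P$ on generators — so it is genuinely indispensable, matching its presence in the hypotheses and explaining why the analogous statement would fail without it.
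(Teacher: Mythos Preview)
Your proof is correct and follows essentially the same route as the paper's: both dispatch (a)$\Rightarrow$(b) via Proposition \ref{4}(c) and Lemma \ref{19}, and for (b)$\Rightarrow$(a) both localize, use finite generation for the identity $(S^{-1}P:_{S^{-1}R}S^{-1}M)=S^{-1}(P:_{R}M)$, and then replace the auxiliary elements of $S$ coming from the localization by the fixed $s$ via the residual bound $(P:_{M}s')\subseteq(P:_{M}s)$. One minor remark: your second invocation of finite generation (checking $sr^{k}M\subseteq P$ on generators) is superfluous, since from $vr^{k}M\subseteq P$ you already get $r^{k}M\subseteq(P:_{M}v)\subseteq(P:_{M}s)$ directly without any finiteness, so finite generation genuinely enters only once, for the colon-ideal localization.
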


\begin{proof}
	(a)$\Rightarrow$(b). It is clear from Proposition \ref{4} (c) and Lemma \ref{19}.
	
	(b)$\Rightarrow$(a). Take $a \in R$ and $m \in M$ with $am \in P$. Then $\frac{a}{1}.\frac{m}{1} \in S^{-1}P$. Since $S^{-1}P$ is a primary submodule of $S^{-1}M$ and $M$ is finitely generated, we can conclude that $\frac{a}{1} \in \sqrt{(S^{-1}P:_{S^{-1}R}S^{-1}M)}=\sqrt{S^{-1}(P:_{R}M)}$ or $\frac{m}{1} \in S^{-1}P$. Then $ua \in \sqrt{(P:_{R}M)}$ or $u^{\prime}m \in P$ for some $u, u^{\prime} \in S$. By assumption, there is an $s \in S$ so that $(P:_{R}s^{\prime}) \subseteq (P:_{R}s)$ for all $s^{\prime} \in S$. If $ua \in \sqrt{(P:_{R}M)}$, then $a^{n}M \subseteq (P:_{M}u^{n}) \subseteq (P:_{R}s)$ for some $n \in \mathbb{N}$ and thus $sa \in \sqrt{(P:_{R}M)}$. If $u^{\prime}m \in P$, a similar argument shows that $sm \in P$. Therefore, $P$ is an $S$-primary submodule of $M$.
	
\end{proof}

 \begin{thm}\label{21}
	
	    Suppose that $P$ is a submodule of $M$ provided $(P:_{R}M) \cap S= \emptyset$. Then $P$ is an $S$-primary submodule of $M$ if and only if $(P:_{M}s)$ is a primary submodule of $M$ for some $s \in S$.
	
\end{thm}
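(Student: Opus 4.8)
The plan is to convert both directions into statements about the ordinary primary submodule $Q := (P:_M s)$, using repeatedly the residual identity
$$(Q:_R M) = ((P:_R M):_R s),$$
which is immediate since $aM \subseteq (P:_M s)$ is equivalent to $saM \subseteq P$. Before either direction I would record two facts valid for every $s \in S$: the chain $P \subseteq (P:_M s) \subseteq (P:_M s^2) \subseteq \cdots$ holds because $P$ is a submodule, and $(P:_M s) \neq M$, since otherwise $s \in (P:_R M) \cap S = \emptyset$.

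For the converse ($\Leftarrow$), assume $(P:_M s)$ is primary for some $s \in S$. Given $rm \in P$, I have $rm \in P \subseteq (P:_M s) = Q$, so primality forces $m \in Q$ (that is, $sm \in P$) or $r \in \sqrt{(Q:_R M)} = \sqrt{((P:_R M):_R s)}$. In the second case $sr^n \in (P:_R M)$ for some $n$, hence $(sr)^n = s^{n-1}(sr^n) \in (P:_R M)$ and $sr \in \sqrt{(P:_R M)}$. Thus $s$ witnesses that $P$ is $S$-primary, and this direction is entirely routine once the residual identity is available.

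For the forward direction ($\Rightarrow$) the difficulty is that feeding a relation $rm \in (P:_M s)$, i.e. $r(sm) \in P$, into the $S$-primary hypothesis yields information about $s^2 m$ and about powers $(sr)^n$, so the computation escapes $(P:_M s)$ into the larger modules $(P:_M s^k)$. I would remove this obstacle with Lemma \ref{19}: choosing $s$ to be simultaneously an $S$-primary witness and the stabilizing element of Lemma \ref{19} (which one may arrange by replacing it with its product with a witness), the inclusions above collapse to the equalities $(P:_M s^k) = (P:_M s)$ and $((P:_R M):_R s^k) = ((P:_R M):_R s)$ for all $k$. With this $s$ fixed, I take $rm \in (P:_M s)$, rewrite it as $r(sm) \in P$, and apply $S$-primality of $P$: either $s^2 m \in P$, so that $m \in (P:_M s^2) = (P:_M s)$, or $(sr)^n = s^n r^n \in (P:_R M)$, so that $r^n \in ((P:_R M):_R s^n) = ((P:_R M):_R s) = (Q:_R M)$ and hence $r \in \sqrt{(Q:_R M)}$. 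Together with $(P:_M s) \neq M$ this shows $Q = (P:_M s)$ is primary. The only genuinely delicate point is the stabilization, which is exactly what Lemma \ref{19} was proved to supply; everything else is bookkeeping with radicals and residuals.
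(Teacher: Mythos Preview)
Your proof is correct and follows essentially the same route as the paper's: both directions are bookkeeping with the residual identity $((P:_M s):_R M)=((P:_R M):_R s)$, and the forward direction invokes Lemma~\ref{19} to collapse $((P:_R M):_R s^k)$ (and in your factoring also $(P:_M s^k)$) back to the case $k=1$. One small redundancy: inspecting the proof of Lemma~\ref{19} shows that the stabilizing element it produces \emph{is} the $S$-primary witness, so your step of passing to a product of the two is unnecessary (though harmless).
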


\begin{proof}
	Assume $(P:_{M}s)$ is a primary submodule of $M$ for some $s \in S$. Let $am \in P$, where $a \in R$ and $m \in M$. As $am \in (P:_{M}s)$, we get $a \in \sqrt{((P:_{M}s):_{R}M)}$ or $m \in (P:_{M}s)$. This yields that $as \in \sqrt{(P:_{R}M)}$ or $sm \in P$. Conversely, assume that $P$ is an $S$-primary submodule of $M$. Then there exists $s \in S$ such that whenever $am \in P$, where $a \in R$ and $m \in M$, then $sa \in \sqrt{(P:_{R}M)}$ or $sm \in P$. Now we prove that $(P:_{M}s)$ is primary. Take $r \in R$ and $m \in M$ with $rm \in (P:_{M}s)$. Then $srm \in P$. As $P$ is $S$-primary, we get $s^{2}r \in \sqrt{(P:_{R}M)}$ or $sm \in P$. If $sm \in P$, then there is nothing to show. Assume that $sm \notin P$. Then $s^{2}r \in \sqrt{(P:_{R}M)}$ and hence $sr \in \sqrt{(P:_{R}M)}$. Thus $r^{n} \in ((P:_{R}M):_{R}s^{n}) \subseteq ((P:_{R}M):_{R}s)$ for some $n \in \mathbb{N}$, by Lemma \ref{19}. Thus, we can conclude that $r^{n} \in  ((P:_{M}s):_{R}M)$, namely $r \in \sqrt{((P:_{M}s):_{R}M)}$. Hence $(P:_{M}s)$ is a prime submodule of $M$.
\end{proof}

 \begin{thm}\label{22}
	
	Suppose that $P$ is a submodule of $M$ provided $(P:_{R}M) \subseteq Jac(R)$, where $Jac(R)$ is the Jacobson radical of $R$. Then the following statements are equivalent:
	
	\begin {itemize}
	
	\item [(a)] $P$ is a primary submodule of $M$.
	
	\item [(b)] $(P:_{R}M)$ is a primary ideal of $R$ and $P$ is an $(R- \mathfrak{m})$-primary submodule of $M$ for each $\mathfrak{m} \in Max(R)$.
	
	\end {itemize}
	
\end{thm}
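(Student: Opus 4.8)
The plan is to prove the two implications separately, leaning on Lemma \ref{0.3}, Proposition \ref{8}, and the elementary fact that an ideal of $R$ contained in no maximal ideal must be all of $R$. Throughout, the hypothesis $(P:_{R}M) \subseteq Jac(R)$ is used precisely to guarantee, for every $\mathfrak{m} \in Max(R)$, that $(P:_{R}M) \subseteq \mathfrak{m}$ and hence that $(P:_{R}M) \cap (R-\mathfrak{m}) = \emptyset$, so that the $(R-\mathfrak{m})$-primary condition is even well-posed.

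For (a)$\Rightarrow$(b) I would first note that a primary submodule is automatically $\{1\}$-primary (the remark following Lemma \ref{0.3}), so Proposition \ref{8}(a) immediately gives that $(P:_{R}M)$ is a primary ideal of $R$. For the second assertion, fix $\mathfrak{m} \in Max(R)$ and set $S = R-\mathfrak{m}$. As observed above, $(P:_{R}M) \subseteq Jac(R) \subseteq \mathfrak{m}$ yields $(P:_{R}M) \cap S = \emptyset$, and then Lemma \ref{0.3}(a) shows that the primary submodule $P$ is $(R-\mathfrak{m})$-primary. This direction is routine.

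The real work is in (b)$\Rightarrow$(a). Take $r \in R$ and $m \in M$ with $rm \in P$ and suppose $r \notin \sqrt{(P:_{R}M)}$; the goal is to force $m \in P$. For each $\mathfrak{m} \in Max(R)$, the $(R-\mathfrak{m})$-primary hypothesis supplies some $s_{\mathfrak{m}} \in R-\mathfrak{m}$ with $s_{\mathfrak{m}}r \in \sqrt{(P:_{R}M)}$ or $s_{\mathfrak{m}}m \in P$. The key step is to eliminate the first alternative: if $s_{\mathfrak{m}}r \in \sqrt{(P:_{R}M)}$, then $s_{\mathfrak{m}}^{n}r^{n} \in (P:_{R}M)$ for some $n \in \mathbb{N}$; since $(P:_{R}M)$ is primary and $r^{n} \notin \sqrt{(P:_{R}M)}$ (as $r \notin \sqrt{(P:_{R}M)}$), the primary property forces $s_{\mathfrak{m}}^{n} \in (P:_{R}M) \subseteq Jac(R) \subseteq \mathfrak{m}$, whence $s_{\mathfrak{m}} \in \mathfrak{m}$, contradicting $s_{\mathfrak{m}} \in R-\mathfrak{m}$. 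Thus $s_{\mathfrak{m}}m \in P$ for every maximal ideal $\mathfrak{m}$.

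Finally I would assemble these local witnesses. Setting $J = (P:_{R}Rm) = \{\, t \in R \mid tm \in P \,\}$, each relation $s_{\mathfrak{m}}m \in P$ shows $s_{\mathfrak{m}} \in J \setminus \mathfrak{m}$, so $J \nsubseteq \mathfrak{m}$ for every $\mathfrak{m} \in Max(R)$. Since a proper ideal is contained in some maximal ideal, we conclude $J = R$, hence $m = 1 \cdot m \in P$, as required; that $P \neq M$ follows because $(P:_{R}M)$ is a proper (primary) ideal. I expect the main obstacle to be exactly the case analysis in this direction: combining the primary-ideal property of $(P:_{R}M)$ with its containment in $Jac(R)$ to discard the alternative $s_{\mathfrak{m}}r \in \sqrt{(P:_{R}M)}$, and then recognizing that the family $\{s_{\mathfrak{m}}\}$ generates the unit ideal.
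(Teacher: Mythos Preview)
Your proof is correct and follows essentially the same route as the paper: for (a)$\Rightarrow$(b) you use the Jacobson-radical hypothesis for disjointness and invoke Lemma~\ref{0.3}(a), and for (b)$\Rightarrow$(a) you pick a witness $s_{\mathfrak{m}}\in R\setminus\mathfrak{m}$ at each maximal ideal, eliminate the alternative $s_{\mathfrak{m}}r\in\sqrt{(P:_{R}M)}$ via the primary-ideal hypothesis, and conclude by observing that the $s_{\mathfrak{m}}$ generate the unit ideal. Your write-up is in fact a bit more careful than the paper's at two points: you correctly start the contrapositive with $r\notin\sqrt{(P:_{R}M)}$ (rather than $r\notin(P:_{R}M)$), and your elimination step---passing to a power and using primariness of $(P:_{R}M)$ together with primeness of $\mathfrak{m}$---fills in the details that the paper leaves implicit.
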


\begin{proof}
	(a)$\Rightarrow$(b). Since $(P:_{R}M) \subseteq Jac(R)$, $(P:_{R}M) \subseteq \mathfrak{m}$ for each $\mathfrak{m} \in Max(R)$ and hence $(P:_{R}M) \cap (R-\mathfrak{m})=\emptyset$. The rest follows from Lemma \ref{0.3} (a).
	
	(b)$\Rightarrow$(a). Let $am \in P$ with $a \notin (P:_{R}M)$ for some $a \in R$ and $m \in M$. Let $\mathfrak{m} \in Max(R)$. As $P$ is an $(R-\mathfrak{m})$-primary submodule of $M$, there exists $s_{\mathfrak{m}} \notin \mathfrak{m}$ such that $as_{\mathfrak{m}} \in \sqrt{(P:_{R}M)}$ or $s_{\mathfrak{m}}m \in P$. As $(P:_{R}M)$ is a primary ideal of $R$ and $s_{\mathfrak{m}} \notin \sqrt{(P:_{R}M)}$, we have $as_{\mathfrak{m}} \notin (P:_{R}M)$ and so $s_{\mathfrak{m}}m \in P$. Now consider the set $\Omega=\{ s_{\mathfrak{m}} \,|\, \exists \,\, \mathfrak{m} \in Max(R),\, s_{\mathfrak{m}} \notin \mathfrak{m}\,\, \text{and} \,\, s_{\mathfrak{m}}m \in P \}$. Then note that $(\Omega)=R$. To see this, take any maximal ideal $\mathfrak{m}^{\prime}$ containing $\Omega$. Then the definition of $\Omega$ requires that there exists $s_{\mathfrak{m}^{\prime}} \in \Omega$ and $s_{\mathfrak{m}^{\prime}} \notin \mathfrak{m}^{\prime}$. As $\Omega \subseteq \mathfrak{m}^{\prime}$, we have $s_{\mathfrak{m}^{\prime}} \in \Omega \subseteq \mathfrak{m}^{\prime}$, a contradiction. Thus $(\Omega)=R$, and this yields $1=r_{1}s_{\mathfrak{m}_{1}}+r_{2}s_{\mathfrak{m}_{2}}+\cdots+r_{n}s_{\mathfrak{m}_{n}}$ for some $r_{i} \in R$ and $s_{\mathfrak{m}_{i}} \notin \mathfrak{m}_{i}$ with $s_{\mathfrak{m}_{i}}m \in P$, where $\mathfrak{m}_{i} \in Max(R)$ for each $i=1,2,...,n$. This yields that $m=r_{1}s_{\mathfrak{m}_{1}}m+r_{2}s_{\mathfrak{m}_{2}}m+\cdots+r_{n}s_{\mathfrak{m}_{n}}m \in P$. Therefore, $P$ is a primary submodule of $M$.
	
\end{proof}

Now we determine all primary submodules of a module over a quasi-local ring in terms of $S$-primary submodules.

 \begin{cor}\label{23}
	
	Suppose $M$ is a module over a quasi-local ring $(R, \mathfrak{m})$. Then the following statements are equivalent:
	
	\begin {itemize}
	
	\item [(a)] $P$ is a primary submodule of $M$.
	
	\item [(b)] $(P:_{R}M)$ is a primary ideal of $R$ and $P$ is an $(R- \mathfrak{m})$-primary submodule of $M$ for each $\mathfrak{m} \in Max(R)$.
	
	\end {itemize}
	
\end{cor}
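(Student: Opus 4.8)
The plan is to obtain Corollary \ref{23} as a direct specialization of Theorem \ref{22}. The only gap between the two statements is the standing hypothesis $(P:_{R}M) \subseteq Jac(R)$ appearing in Theorem \ref{22}, so the whole task reduces to showing that this hypothesis is automatically met once $R$ is assumed quasi-local. I would begin by recording the two elementary features of a quasi-local ring $(R,\mathfrak{m})$: its maximal ideal $\mathfrak{m}$ is the \emph{unique} maximal ideal, so $Max(R)=\{\mathfrak{m}\}$, and consequently $Jac(R)$, being the intersection of all maximal ideals, equals $\mathfrak{m}$. With this identification, condition (b) of the corollary is word-for-word condition (b) of Theorem \ref{22}, the quantifier ``for each $\mathfrak{m}\in Max(R)$'' simply ranging over the single maximal ideal.

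Next I would handle the two implications, in each case first verifying that $(P:_{R}M)\subseteq Jac(R)=\mathfrak{m}$ and then invoking the corresponding direction of Theorem \ref{22}. For (a)$\Rightarrow$(b): if $P$ is a primary submodule then by definition $P\neq M$, so $(P:_{R}M)$ is a proper ideal of $R$; a proper ideal is contained in some maximal ideal, and here that forces $(P:_{R}M)\subseteq \mathfrak{m}=Jac(R)$. Theorem \ref{22} then yields (b). For (b)$\Rightarrow$(a): if (b) holds then $(P:_{R}M)$ is a primary ideal, hence proper, so once more $(P:_{R}M)\subseteq\mathfrak{m}=Jac(R)$, and Theorem \ref{22} delivers (a).

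The main obstacle, such as it is, is entirely bookkeeping: one must confirm that the ambient hypothesis of Theorem \ref{22} is satisfied before the theorem may be applied, and this amounts to the observation that both a primary submodule and a primary ideal are proper by definition, so that in a quasi-local ring their defining residual automatically sits inside the unique maximal ideal. Once this is noted the corollary is immediate, and no further computation is required.
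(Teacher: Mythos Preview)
Your proposal is correct and follows exactly the paper's approach: the paper's proof reads simply ``It is clear from Theorem \ref{22},'' and your write-up merely spells out why the hypothesis $(P:_{R}M)\subseteq Jac(R)$ is automatic in the quasi-local setting. No further work is needed.
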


\begin{proof}
	It is clear from Theorem \ref{22}.
	
\end{proof}

\begin{rem}\label{24}
	
	\begin{itemize}
		
		\item [(a)] Suppose that $M$ is an $R$-module. The idealization $R(+)M\\=\{(a,m) \,\,| \, a \in R, \, m \in M \}$ of $M$ is a commutative ring whose addition is component-wise and whose multiplication is defined as $(a,m)(b,m^{\prime})=(ab,am^{\prime}+bm)$ for each $a, \, b \in R$ and $m, \, m^{\prime} \in M$. If $S$ is a multiplicatively closed subset of $R$ and $P$ is a submodule of $M$, then $S(+)P=\{(s, p)\,|\, s \in S, \, p \in P\}$ is a multiplicatively closed subset of $R(+)M$ \cite{anderson2009idealization, nagata1962local}.
		
		\item [(b)] Radical ideals of $R(+)M$ have the form $I(+)M$, where $I$ is a radical ideal of $R$. If $J$ is an ideal of $R(+)M$, then $\sqrt{J}=\sqrt{I}(+)M$. In particular, if $I$ is an ideal of $R$ and $N$ is a submodule of $M$, then $\sqrt{I(+)N}=\sqrt{I}(+)M$ \cite[Theorem 3.2 (3)]{anderson2009idealization}.
		
	\end{itemize}

\end{rem}

\begin{prop}\label{24.1}
	Let $M$ be an $R$-module and $p$ be an ideal of $R$ such that $p \subseteq Ann(M)$. Then the following are equivalent:
	
	\begin{itemize}
		
		\item [(a)] $p$ is a primary ideal of $R$.
		
		\item [(b)] $p(+)M$ is a primary ideal of $R(+)M$.
		
	\end{itemize}

\end{prop}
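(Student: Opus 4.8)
The plan is to reduce everything to two observations: first, that under the hypothesis $p\subseteq Ann(M)$ the set $p(+)M$ is a proper ideal of $R(+)M$, and second, the radical identity $\sqrt{p(+)M}=\sqrt{p}(+)M$ already recorded in Remark \ref{24} (b). Once these are in place, both implications become a matter of passing between an element $a\in R$ and the element $(a,0)\in R(+)M$. I would open by checking that $p(+)M$ is an ideal: for $(r,m)\in R(+)M$ and $(q,n)\in p(+)M$ one computes $(r,m)(q,n)=(rq,\,rn+qm)$, and here $qm=0$ because $q\in p\subseteq Ann(M)$, so the product equals $(rq,rn)\in p(+)M$ with $rq\in p$. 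This is precisely the step where the hypothesis $p\subseteq Ann(M)$ is used.

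For (a)$\Rightarrow$(b), I would take a product $(a,m)(b,n)=(ab,\,an+bm)\in p(+)M$, read off $ab\in p$, and apply primariness of $p$ to obtain $a\in p$ or $b\in\sqrt{p}$. In the first case $(a,m)\in p(+)M$ directly; in the second, $b\in\sqrt{p}$ gives $(b,n)\in\sqrt{p}(+)M=\sqrt{p(+)M}$ by Remark \ref{24} (b). Properness of $p(+)M$ is clear, since $p$ primary forces $p\neq R$, so choosing $a\in R\setminus p$ yields $(a,0)\notin p(+)M$. Hence $p(+)M$ is primary.

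For (b)$\Rightarrow$(a), I would start from $ab\in p$ and feed the product $(a,0)(b,0)=(ab,0)\in p(+)M$ into the primary hypothesis on $p(+)M$. This yields $(a,0)\in p(+)M$, i.e. $a\in p$, or $(b,0)\in\sqrt{p(+)M}=\sqrt{p}(+)M$, i.e. $b\in\sqrt{p}$; and $p\neq R$ follows from $p(+)M\neq R(+)M$. Thus $p$ is primary.

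I do not anticipate a genuine obstacle: the only nontrivial ingredient is the radical computation $\sqrt{p(+)M}=\sqrt{p}(+)M$, which is exactly Remark \ref{24} (b) and rests on the fact that every element $(0,m)$ is nilpotent in $R(+)M$. Everything else is coordinate bookkeeping, with the hypothesis $p\subseteq Ann(M)$ serving only to keep the second coordinate of products inside $p(+)M$ so that it is transparently an ideal.
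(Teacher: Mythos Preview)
Your argument is correct and complete; it is precisely the sort of verification the paper has in mind when it records the proof as ``This is straightforward.'' There is nothing to compare against, since the paper supplies no details.

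One small inaccuracy worth flagging: you write that the hypothesis $p\subseteq Ann(M)$ is what makes $p(+)M$ an ideal, via $qm=0$. In fact $p(+)M$ is an ideal of $R(+)M$ for \emph{any} ideal $p$ of $R$, with no condition needed: in your computation $(r,m)(q,n)=(rq,\,rn+qm)$, the second coordinate $rn+qm$ lies in $M$ automatically, so the product is in $p(+)M$ regardless of whether $qm$ vanishes. Your two implications likewise go through without ever invoking $p\subseteq Ann(M)$; the radical identity from Remark~\ref{24}\,(b) applies to $p(+)M$ for any ideal $p$ since $pM\subseteq M$ trivially. So the hypothesis is not actually used anywhere in your proof (nor, as far as one can tell, is it needed for the proposition), and your remark about its role should be dropped or amended.
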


\begin{proof}
	This is straightforward.
\end{proof}

\begin{thm}\label{25}
	Let $S$ be a multiplicatively closed subset of $R$, $p$ be an ideal of $R$ provided $p \cap S= \emptyset$ and $M$ be an $R$-module. Then the following are equivalent:
	
	\begin{itemize}
		
		\item [(a)] $p$ is an $S$-primary ideal of $R$.
		
		\item [(b)] $p(+)M$ is an $S(+)0$-primary ideal of $R(+)M$.
		
		\item [(c)] $p(+)M$ is an $S(+)M$-primary ideal of $R(+)M$.
		
	\end{itemize}
	
\end{thm}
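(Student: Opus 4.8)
The plan is to run the cycle of implications (a)$\Rightarrow$(b)$\Rightarrow$(c)$\Rightarrow$(a), working throughout inside the idealization $T=R(+)M$. Before starting I would record three preliminaries. First, $p(+)M$ is genuinely an ideal of $T$: for $(a,m)\in T$ and $(x,y)\in p(+)M$ one computes $(a,m)(x,y)=(ax,ay+xm)$ with $ax\in p$, so the product lies in $p(+)M$. Second, the disjointness conditions demanded in (b) and (c) hold for free: any element of $(p(+)M)\cap(S(+)0)$ or of $(p(+)M)\cap(S(+)M)$ would have first coordinate in $p\cap S=\emptyset$, which is impossible. Third, I would invoke Remark \ref{24} (b) with $I=p$ and $N=M$ to fix the radical identity $\sqrt{p(+)M}=\sqrt{p}(+)M$; this is the device that transports radicals between $R$ and $T$.

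For (a)$\Rightarrow$(b), let $s\in S$ witness that $p$ is $S$-primary, and I claim $(s,0)\in S(+)0$ witnesses that $p(+)M$ is $S(+)0$-primary. Given $(a,m)(b,n)=(ab,an+bm)\in p(+)M$, the only genuine constraint is $ab\in p$, since the second coordinate automatically lies in $M$. Applying the $S$-primariness of $p$ gives $sa\in\sqrt{p}$ or $sb\in p$. In the first case $(s,0)(a,m)=(sa,sm)\in\sqrt{p}(+)M=\sqrt{p(+)M}$, and in the second $(s,0)(b,n)=(sb,sn)\in p(+)M$, which is exactly the required condition.

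For (b)$\Rightarrow$(c), I would simply apply Proposition \ref{4} (a) to the inclusion $S(+)0\subseteq S(+)M$ of multiplicatively closed subsets of $T$: since $p(+)M$ is $S(+)0$-primary and $(p(+)M)\cap(S(+)M)=\emptyset$ as noted above, it follows at once that $p(+)M$ is $S(+)M$-primary. For (c)$\Rightarrow$(a), I would exploit the ring embedding $r\mapsto(r,0)$ of $R$ into $T$. Let $(s,m_{0})\in S(+)M$ witness $p(+)M$, and take $r,r'\in R$ with $rr'\in p$. Then $(r,0)(r',0)=(rr',0)\in p(+)M$, so the hypothesis yields $(s,m_{0})(r,0)\in\sqrt{p(+)M}$ or $(s,m_{0})(r',0)\in p(+)M$. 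Since $(s,m_{0})(r,0)=(sr,rm_{0})$ and $(s,m_{0})(r',0)=(sr',r'm_{0})$, reading off first coordinates through $\sqrt{p(+)M}=\sqrt{p}(+)M$ gives $sr\in\sqrt{p}$ or $sr'\in p$, so $p$ is $S$-primary with witness $s$.

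The only real care needed is the bookkeeping of second coordinates: membership in $p(+)M$ (respectively in $\sqrt{p(+)M}=\sqrt{p}(+)M$) constrains only the first coordinate, so the possibly nonzero $m_{0}$ appearing in the (c)$\Rightarrow$(a) witness never obstructs anything. Thus the main — and essentially only — obstacle is to organize these coordinate computations cleanly; once the radical identity of Remark \ref{24} (b) and the monotonicity of Proposition \ref{4} (a) are in hand, no genuinely hard step remains.
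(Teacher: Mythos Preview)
Your proof is correct and follows essentially the same route as the paper: the same cycle (a)$\Rightarrow$(b)$\Rightarrow$(c)$\Rightarrow$(a), the same witness $(s,0)$ for (a)$\Rightarrow$(b), the same appeal to Proposition \ref{4}(a) via $S(+)0\subseteq S(+)M$ for (b)$\Rightarrow$(c), and the same first-coordinate read-off through $\sqrt{p(+)M}=\sqrt{p}(+)M$ for (c)$\Rightarrow$(a). Your explicit preliminaries (that $p(+)M$ is an ideal and that the disjointness conditions hold) are not spelled out in the paper but are implicitly used there, so if anything your write-up is slightly more careful.
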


\begin{proof}
	(a)$\Rightarrow$(b). Let $(x,m)(y,m^{\prime})=(xy,xm^{\prime}+ym) \in p(+)M$, where $x, y \in R$ and $m, m^{\prime} \in M$. Then we get $xy \in p$. As $p$ is $S$-primary, there exists $s \in S$ such that $sx \in \sqrt{p}$ or $sy \in p$. Now put $s^{\prime}=(s,0) \in S(+)0$. Then we have $s^{\prime}(x,m)=(sx,sm) \in \sqrt{p}(+)M=\sqrt{p(+)M}$ or $s^{\prime}(y,m^{\prime})=(sy,sm^{\prime}) \in p(+)M$. Therefore, $p(+)M$ is an $S(+)0$-primary ideal of $R(+)M$.
	
	(b)$\Rightarrow$(c). It is clear from Proposition \ref{4}.
	
	(c)$\Rightarrow$(a). Let $xy \in p$ for some $x,y \in R$. Then $(x,0)(y,0) \in p(+)M$. Since $p(+)M$ is $S(+)M$-primary, there exists $s=(s_{1},m_{1}) \in S(+)M$ such that $s(x,0)=(s_{1}x,xm_{1}) \in \sqrt{p(+)M}=\sqrt{p}(+)M$ or $s(y,0)=(s_{1}y,ym_{1}) \in p(+)M$ and hence we get $s_{1}x \in \sqrt{p}$ or $s_{1}y \in p$. Therefore $p$ is an $S$-primary ideal of $R$.
	
\end{proof}

\begin{rem}\label{26}
	Let $M$ be an $R$-module and let $S$ be a multiplicatively closed subset of $R$ such that $Ann_{R}(M) \cap S=\emptyset$. We say that $M$ is an $S$-\textit{torsion-free module} in the case that there is an $s \in S$ such that if $rm=0$, where $r \in R$ and $m \in M$, then $sm=0$ or $sr=0$ \cite[Definition 2.23]{sevim2019s}.

\end{rem}

\begin{prop}\label{27}
     Let $M$ be an $R$-module. Assume that $P$ is a submodule of $M$ and $S$ is a multiplicatively closed subset of $R$ such that $Ann_{R}(M) \cap S=\emptyset$. Then $P$ is an $S$-primary submodule of $M$ if and only if the factor module $M/P$ is a $\pi(S)$-torsion-free $R/\sqrt{(P:_{R}M)}$-module, where $\pi:R \rightarrow R/\sqrt{(P:_{R}M)}$ is the canonical homomorphism.
	
\end{prop}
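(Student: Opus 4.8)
The plan is to recognize the asserted equivalence as a verbatim translation of two definitions through the canonical maps, so that essentially no computation is required. Write $\overline{R}=R/\sqrt{(P:_{R}M)}$ and $\overline{M}=M/P$, let $\pi:R\to\overline{R}$ be the given homomorphism and $q:M\to\overline{M}$ the quotient map, and for $r\in R$, $m\in M$ set $\overline{r}=\pi(r)$, $\overline{m}=q(m)$; the $\overline{R}$-action on $\overline{M}$ is read through representatives via $\overline{r}\cdot\overline{m}:=\overline{rm}$ (independent of the lift of $\overline{m}$ because $P$ is a submodule). If one prefers, Corollary \ref{7}(b) with $L=P$ first reduces the claim to the case $P=(\overline{0})$ inside $\overline{M}$, but this is not essential.

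Before comparing definitions I would check that the phrase ``$\pi(S)$-torsion-free'' is meaningful in the sense of Remark \ref{26}. Since $S$ is multiplicatively closed and $(P:_{R}M)\cap S=\emptyset$, no power of an element of $S$ lies in $(P:_{R}M)$, so $\sqrt{(P:_{R}M)}\cap S=\emptyset$; hence $0\notin\pi(S)$ and $\pi(S)$ is a multiplicatively closed subset of $\overline{R}$. Moreover $\overline{r}\,\overline{M}=\overline{0}$ forces $rM\subseteq P$, i.e. $r\in(P:_{R}M)\subseteq\sqrt{(P:_{R}M)}$ and thus $\overline{r}=\overline{0}$; therefore $Ann_{\overline{R}}(\overline{M})=(\overline{0})$, and in particular $Ann_{\overline{R}}(\overline{M})\cap\pi(S)=\emptyset$.

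The heart of the matter is the dictionary, valid for all $r\in R$, $m\in M$ and $s\in S$: one has $\overline{r}\,\overline{m}=\overline{0}$ iff $rm\in P$, one has $\overline{s}\,\overline{m}=\overline{0}$ iff $sm\in P$, and one has $\overline{s}\,\overline{r}=\overline{0}$ in $\overline{R}$ iff $sr\in\sqrt{(P:_{R}M)}$. As $\pi$ and $q$ are surjective, letting $\overline{r}$ range over $\overline{R}$ and $\overline{m}$ over $\overline{M}$ amounts to letting $r$ range over $R$ and $m$ over $M$. Taking $\overline{s}=\pi(s)$ as the torsion-free witness, the condition ``there is $s\in S$ such that $\overline{r}\,\overline{m}=\overline{0}$ implies $\overline{s}\,\overline{m}=\overline{0}$ or $\overline{s}\,\overline{r}=\overline{0}$'' becomes, term by term, ``there is $s\in S$ such that $rm\in P$ implies $sm\in P$ or $sr\in\sqrt{(P:_{R}M)}$'', which is exactly Definition \ref{0.1}. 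Reading the equivalence in both directions, with the same $s$ serving as witness on each side, gives both implications simultaneously. The only genuine obstacle is the bookkeeping around the $\overline{R}$-action on $\overline{M}$ and the matching of the two universal quantifiers through surjectivity; once that is in place the argument is purely definitional.
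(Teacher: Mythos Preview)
Your argument is essentially the paper's: both unfold Definition~\ref{0.1} and Remark~\ref{26} through the dictionary $\overline{r}\,\overline{m}=\overline{0}\Leftrightarrow rm\in P$, $\overline{s}\,\overline{r}=\overline{0}\Leftrightarrow sr\in\sqrt{(P:_{R}M)}$, and $\overline{s}\,\overline{m}=\overline{0}\Leftrightarrow sm\in P$, carrying the same witness $s$ across. You go further than the paper by verifying that $\pi(S)$ is a legitimate multiplicatively closed set with $0\notin\pi(S)$ and $Ann_{\overline{R}}(\overline{M})\cap\pi(S)=\emptyset$, which the paper leaves unspoken.

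One caveat, since you explicitly raise well-definedness: you check that $\overline{r}\cdot\overline{m}:=\overline{rm}$ is independent of the lift of $\overline{m}$, but it need not be independent of the lift of $\overline{r}$, because in general $\sqrt{(P:_{R}M)}\,M\not\subseteq P$. Already in Example~\ref{2.3} one has $\overline{R}=\mathbb{Z}/2\mathbb{Z}$ while $M/P=\mathbb{Z}_{4}$, which admits no $\mathbb{Z}/2\mathbb{Z}$-module structure. The paper's own proof shares this gap and simply works formally through representatives; the difficulty is really with the formulation of the proposition, and it is resolved in Proposition~\ref{29}, which passes to $R/(P:_{R}M)$ and the quasi $S$-torsion-free notion instead.
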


\begin{proof}
	Suppose that $P$ is an $S$-primary submodule of $M$. Let $\overline{am}=0_{M/P}$, where $\overline{a}=a+\sqrt{(P:_{R}M)}$ and $\overline{m}=m+P$ for some $a \in R$ and $m \in M$. This yields that $am \in P$. As $P$ is $S$-primary, there exists $s \in S$ such that $sa \in \sqrt{(P:_{R}M)}$ or $sm \in P$. Then we can conclude that $\pi(s)\overline{a}=0_{R/\sqrt{(P:_{R}M)}}$ or $\pi(s)\overline{m}=0_{M/P}$. Therefore, $M/P$ is a $\pi(S)$-torsion-free $R/\sqrt{(P:_{R}M)}$-module. For the other direction, suppose that $M/P$ is a $\pi(S)$-torsion-free $R/\sqrt{(P:_{R}M)}$-module. Let $am \in P$,where $a \in R$ and $m \in M$. Put $\overline{a}=a+\sqrt{(P:_{R}M)}$ and $\overline{m}=m+P$. Then note that $\overline{am}=0_{M/P}$. As $M/P$ is a $\pi(S)$-torsion-free $R/\sqrt{(P:_{R}M)}$-module, there exists $s \in S$ such that $\pi(s)\overline{a}=0_{R/\sqrt{(P:_{R}M)}}$ or $\pi(s)\overline{m}=0_{M/P}$. This yields that $sa \in \sqrt{(P:_{R}M)}$ or $sm \in P$. Accordingly, $P$ is an $S$-primary submodule of $M$.
\end{proof}

\begin{defn}\label{28}
	Let $M$ be an $R$-module and let $S$ be a multiplicatively closed subset of $R$ such that $Ann_{R}(M) \cap S=\emptyset$. We say that $M$ is a \textit{quasi} $S$-\textit{torsion-free module}, if there exists $s \in S$ such that whenever $rm=0$, where $r \in R$ and $m \in M$, then $sm=0$ or $(sr)^{t}=0$ for some $t \in \mathbb{N}$.
\end{defn}

According to Definition \ref{28}, Proposition \ref{27} can be expressed as follows.

\begin{prop}\label{29}
	Let $M$ be an $R$-module. Assume that $P$ is a submodule of $M$ and $S$ is a multiplicatively closed subset of $R$ such that $Ann_{R}(M) \cap S=\emptyset$. Then $P$ is an $S$-primary submodule of $M$ if and only if the factor module $M/P$ is a quasi $\pi^{\prime}(S)$-torsion-free $R/(P:_{R}M)$-module, where $\pi^{\prime}:R \rightarrow R/(P:_{R}M)$ is the canonical homomorphism.	
\end{prop}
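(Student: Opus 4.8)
The plan is to imitate the proof of Proposition \ref{27}, replacing the radical $\sqrt{(P:_{R}M)}$ by $(P:_{R}M)$ itself and the ordinary torsion-free condition by the quasi version of Definition \ref{28}. Write $\bar{R}=R/(P:_{R}M)$, $\pi^{\prime}(a)=\bar{a}=a+(P:_{R}M)$, and $\overline{m}=m+P$ for the image in $M/P$; then the whole argument reduces to a translation dictionary between the two sides. The only piece carrying any content is the observation that, in the quotient ring $\bar{R}$, nilpotency of $\bar{s}\bar{a}$ is the same as membership of $sa$ in the radical: indeed $(\bar{s}\bar{a})^{t}=0_{\bar{R}}$ for some $t\in\mathbb{N}$ if and only if $(sa)^{t}\in(P:_{R}M)$ for some $t$, which by definition of the radical is exactly $sa\in\sqrt{(P:_{R}M)}$.

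First I would record the two remaining correspondences, both immediate: $\bar{a}\,\overline{m}=0_{M/P}$ if and only if $am\in P$, and $\bar{s}\,\overline{m}=0_{M/P}$ if and only if $sm\in P$. I would also note that $M/P$ is naturally an $\bar{R}$-module since $(P:_{R}M)$ annihilates it, and that $Ann_{\bar{R}}(M/P)=\{0_{\bar{R}}\}$, so the standing hypothesis $Ann_{\bar{R}}(M/P)\cap\pi^{\prime}(S)=\emptyset$ required by Definition \ref{28} is precisely the condition $(P:_{R}M)\cap S=\emptyset$ built into the notion of an $S$-primary submodule. Thus the compatibility of the two preconditions is automatic, and no separate verification is needed.

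For the forward direction I would assume $P$ is $S$-primary, fix the witness $s\in S$ from Definition \ref{0.1}, and take $\bar{s}=\pi^{\prime}(s)\in\pi^{\prime}(S)$ as the candidate witness for quasi torsion-freeness. Given $\bar{a}\,\overline{m}=0_{M/P}$, the dictionary turns this into $am\in P$; the $S$-primary property yields $sa\in\sqrt{(P:_{R}M)}$ or $sm\in P$, and translating back gives $(\bar{s}\bar{a})^{t}=0_{\bar{R}}$ for some $t$ or $\bar{s}\,\overline{m}=0_{M/P}$, which is exactly what Definition \ref{28} demands. The converse is the same chain of equivalences read in reverse: starting from $am\in P$, rewrite it as $\bar{a}\,\overline{m}=0_{M/P}$, apply quasi $\pi^{\prime}(S)$-torsion-freeness to obtain $(\bar{s}\bar{a})^{t}=0_{\bar{R}}$ or $\bar{s}\,\overline{m}=0_{M/P}$, and translate these back to $sa\in\sqrt{(P:_{R}M)}$ or $sm\in P$.

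I do not expect a genuine obstacle; the statement is a reformulation, and the entire weight of the proof sits in the single radical-versus-nilpotency identity noted above. The one point to watch is the bookkeeping between the two quotients, $R/(P:_{R}M)$ for the ring and $M/P$ for the module, together with ensuring that the element $s$ supplied by Definition \ref{28} is chosen uniformly (there is a single $s$), which matches the single witness $s$ appearing in the definition of an $S$-primary submodule.
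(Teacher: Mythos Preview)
Your proposal is correct and matches the paper's approach: the paper gives no separate proof of Proposition~\ref{29}, merely remarking that it is Proposition~\ref{27} rephrased via Definition~\ref{28}, which is precisely the translation you carry out. Your identification of the key point---that $(\pi'(s)\pi'(a))^{t}=0$ in $R/(P:_{R}M)$ is equivalent to $sa\in\sqrt{(P:_{R}M)}$---and your verification that the precondition $Ann_{\bar R}(M/P)\cap\pi'(S)=\emptyset$ coincides with $(P:_{R}M)\cap S=\emptyset$ are exactly what is needed and are handled correctly.
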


\begin{thm}\label{31}
	Let $M$ be a module over an integral domain $R$. The following are equivalent:
	\begin{itemize}
		\item [(a)] $M$ is a torsion-free module;
		
		\item [(b)] $M$ is a quasi $(R-p)$-torsion-free module for each $p \in Spec(R)$;
		
		\item [(c)] $M$ is a quasi $(R-\mathfrak{m})$-torsion-free module for each $\mathfrak{m} \in Max(R)$.
			
	\end{itemize}
\end{thm}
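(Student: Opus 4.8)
The plan is to prove the cycle (a)$\Rightarrow$(b)$\Rightarrow$(c)$\Rightarrow$(a), exploiting throughout the single structural fact that in an integral domain the only nilpotent element is $0$. This collapses the defining condition of a quasi $S$-torsion-free module (Definition \ref{28}) to something very close to ordinary torsion-freeness: if $S=R-p$ for a prime $p$, then every $s\in S$ is nonzero (since $0\in p$), so the alternative $(sr)^{t}=0$ is equivalent to $sr=0$, hence to $r=0$. Recording this reduction at the outset will streamline both nontrivial implications. I would also note the harmless normalisation that $M$ is taken nonzero, so that for torsion-free $M$ one has $Ann_{R}(M)=0$, which keeps $Ann_{R}(M)\cap S=\emptyset$ and makes all three notions applicable coherently.

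For (a)$\Rightarrow$(b), I would fix $p\in Spec(R)$, set $S=R-p$, and take the witness $s=1\in S$ (valid since $1\notin p$). Since $M$ is nonzero and torsion-free, $Ann_{R}(M)=0$, so $Ann_{R}(M)\cap S=\emptyset$ and Definition \ref{28} applies. Then for any $r\in R$ and $m\in M$ with $rm=0$, torsion-freeness forces $r=0$ or $m=0$; in the first case $(sr)^{t}=0$ and in the second $sm=0$, which is exactly the required alternative. The implication (b)$\Rightarrow$(c) is immediate, because $Max(R)\subseteq Spec(R)$, so the hypothesis in (b) is strictly stronger than that in (c).

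The substance of the theorem is (c)$\Rightarrow$(a), and this is the step I expect to be the main obstacle. I would argue by contradiction: suppose $M$ is not torsion-free, so there exist $r\neq 0$ and $m\neq 0$ with $rm=0$. For each $\mathfrak{m}\in Max(R)$ the quasi $(R-\mathfrak{m})$-torsion-free hypothesis supplies a fixed $s_{\mathfrak{m}}\notin\mathfrak{m}$ such that $s_{\mathfrak{m}}m=0$ or $(s_{\mathfrak{m}}r)^{t}=0$ for some $t\in\mathbb{N}$. Since $R$ is a domain and $s_{\mathfrak{m}}\neq 0\neq r$, the second alternative is impossible, so $s_{\mathfrak{m}}m=0$, i.e. $s_{\mathfrak{m}}\in Ann_{R}(m)$.

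Then I would collect these witnesses into the ideal $I=(\,s_{\mathfrak{m}}:\mathfrak{m}\in Max(R)\,)\subseteq Ann_{R}(m)$. By construction $s_{\mathfrak{m}}\in I$ but $s_{\mathfrak{m}}\notin\mathfrak{m}$, so $I\not\subseteq\mathfrak{m}$ for every maximal ideal $\mathfrak{m}$; an ideal contained in no maximal ideal must equal $R$. Hence $1\in I\subseteq Ann_{R}(m)$, giving $m=0$, a contradiction. The only delicate points are the trivial-nilpotent reduction, which must be invoked explicitly because Definition \ref{28} is phrased in terms of $(sr)^{t}$ rather than $sr$, and the standard covering argument that the family $\{s_{\mathfrak{m}}\}$ generates the unit ideal; everything else is routine.
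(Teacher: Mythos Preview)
Your proof is correct and follows essentially the same route as the paper: (a)$\Rightarrow$(b) and (b)$\Rightarrow$(c) are declared clear there, and for (c)$\Rightarrow$(a) the paper likewise uses the integral-domain hypothesis to discard the nilpotent alternative and then argues that the witnesses $s_{\mathfrak{m}}$ generate the unit ideal (invoking the covering argument from Theorem~\ref{22}), forcing $m=0$. Your treatment is in fact slightly more careful in recording the normalisation $M\neq 0$ and the precondition $Ann_{R}(M)\cap S=\emptyset$, which the paper passes over.
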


\begin{proof}
	(a)$\Rightarrow$(b). It is clear.
	
	(b)$\Rightarrow$(c). It is clear.
	
	(c)$\Rightarrow$(a). Assume that $a\neq 0$. Take $\mathfrak{m} \in Max(R)$. As $M$ is quasi $(R-\mathfrak{m})$-torsion-free, there exists $s_{m} \neq \mathfrak{m}$ so that $s_{m}m=0$ or $(s_{m}a)^{t}=0$ for some $t \in \mathbb{N}$. As $R$ is an integral domain, $(s_{m}a)^{t}\neq0$. Now, put $\Omega=\{\, s_{m} \in R \, |\, \exists \mathfrak{m} \in Max(R), s_{m} \notin \mathfrak{m} \,\, \text{and} \,\, s_{m}m=0 \}$. A similar argument in the proof of Theorem \ref{22} shows that $\Omega=R$. Then we have $(s_{m_{1}})+(s_{m_{2}})+\cdots+(s_{m_{n}})=R$ for some $(s_{m_{i}}) \in \Omega$. this implies that $Rm=\sum_{i=1}^{n}(s_{m_{i}})m=(0)$ and hence $m=0$. This means $M$ is a torsion-free module.
	
\end{proof}




\bibliographystyle{plain}

\end{document}